\newtheorem{theorem}{Theorem}[section]
\newtheorem{remark}[theorem]{Remark}
\newtheorem{lemma}[theorem]{Lemma}
\newtheorem{definition}[theorem]{Definition}
\newtheorem{cor}[theorem]{Corollary}
\newcommand{\ben}{\begin{eqnarray}}
\newcommand{\een}{\end{eqnarray}}
\newcommand{\beno}{\begin{eqnarray*}}
\newcommand{\eeno}{\end{eqnarray*}}
\def\i{\,{\rm i}\,}
\newdimen\eqjot \eqjot = 1\jot
\def\openupeq{\openup \the\eqjot}
\def\addtab#1={#1\;&=}
\def\addtabe#1=#2={#1=#2\;&=}
\def\ostrut#1#2{\hbox{\vrule height #1pt depth #2pt width 0pt}}
\begin{document}

\title [Bidirectional dispersive equations]
{New revival phenomena for  bidirectional dispersive hyperbolic equations}

\author{George Farmakis}
\address{ George Farmakis \newline Department of Mathematics and Maxwell Institute for Mathematical Sciences, Heriot-Watt
University, Edinburgh EH14 4AS; London South Bank University, London SE1 6LN, UK}
\email{george.farmakis@lsbu.ac.uk}
\author{Jing Kang}
\address{Jing Kang\newline
Center for Nonlinear Studies and School of Mathematics, Shaanxi Key Laboratory of Mathematical Theory and Computations of Fluid Mechanics, Northwest University, Xi'an 710069, P.R. China}
\email{jingkang@nwu.edu.cn}
\author{Peter J. Olver}
\address{Peter J. Olver\newline
School of Mathematics, University of Minnesota, Minneapolis, MN 55455, USA}
\email{olver@umn.edu}
\author{Changzheng Qu}
\address{Changzheng Qu\newline
School of Mathematics and Statistics, Ningbo University, Ningbo 315211, P.R. China; Shaanxi Key Laboratory of Mathematical Theory and Computations of Fluid Mechanics, Northwest University, Xi'an, 710069, P. R. China}
\email{quchangzheng@nbu.edu.cn}
\author{Zihan Yin}
\address{Zihan Yin\newline
School of Mathematics, Northwest University, Xi'an 710069, P.R. China}
\email{yinzihan@stumail.nwu.edu.cn}

\begin{abstract}
In this paper, the dispersive revival and fractalization phenomena for bidirectional dispersive equations on a bounded interval subject to periodic boundary conditions and discontinuous initial profiles are investigated. Firstly,  we study the periodic initial-boundary value problem of the linear beam equation with step function initial data, and analyze the manifestation of the revival phenomenon for the corresponding solution at rational times. Next, we extend the investigation to periodic initial-boundary value problems of more general bidirectional dispersive equations. We prove that, if the initial functions are of bounded variation, the dynamical evolution of such periodic problems depend essentially upon the large wave number asymptotics of the associated dispersion relations. Integral polynomial or asymptotically integral polynomial dispersion relations produce dispersive revival/fractalization rational/irrational dichotomies, whereas those with non-polynomial growth result in fractal profiles at all times. Finally, numerical experiments, in the concrete case of the nonlinear beam equation, are used to demonstrate how such effects persist into the nonlinear regime.

\end{abstract}

\maketitle \numberwithin{equation}{section}


\small {\it Key words and phrases:}\ beam equation; revival; fractalization; Talbot effect; dispersive equation.
\vskip 0.1cm
\noindent {\sl Mathematics Subject Classification} (2020): 37K55, 35Q51\\

\section{Introduction}

This paper is devoted to the study of the periodic initial-boundary value problem for bidirectional dispersive partial differential equations. We prove that, for linear equations, if the initial condition at time zero is a step function or, more generally, a function of bounded variation, the time evolution of the  bidirectional dispersive  equations subject to periodic boundary conditions will exhibit new revival phenomena at rational times,  of a different form from that previously observed in unidirectional dispersive evolution equations, whereas at irrational times the solution exhibits a continuous, but non-differentiable fractal profile.

The term ``revival'' is based on the experimentally observed phenomenon of quantum revival \cite{BMS, VVS}, in which an electron that is initially concentrated near a single location of its orbital shell is re-concentrated near a finite number of orbital locations at certain times. A precursor of the revival phenomenon was observed as far back as 1834 in a striking optical experiment, \cite{Tal}, conducted in 1836 by William Henry Fox Talbot. This motivated the pioneering work of Berry and his collaborators, \cite{Ber, BK, BMS}, on what they called the Talbot effect in the context of the linear free space Schr\"{o}dinger equation. Rigorous analytical results and estimates justifying the Talbot effect can be found  in the work of Kapitanski and Rodnianski, \cite{KR, Rod}, Oskolkov, \cite{Osk98, Osk92}, and Taylor, \cite{Tay}.
The Talbot effect governs, in the quantum mechanical setting, the behavior of rough solutions subject to periodic boundary conditions. The evolution of the rough initial profile, for instance, a step function, also known as the Riemann problem \cite{Whi}, ``quantizes'' into a dispersive revival profile at rational times, but ``fractalizes'' into a continuous but nowhere differentiable profile having a specific fractal dimension at irrational times.

In \cite{CO12, Olv10}, the same Talbot effect, which the authors called dispersive quantization and fractalization, was shown to appear in general periodic linear dispersive equations possessing an ``integral polynomial'' (a polynomial with integer coefficients) dispersion relation, which included the prototypical linearized Schr\"{o}dinger and Korteweg-de Vries (KdV) equations. Based on these investigations, one learns that a linear dispersive equation admitting a polynomial dispersion relation and subject to periodic boundary conditions will exhibit the revival phenomenon at each rational time, which means that the fundamental solution, i.e., that induced by a delta function initial condition, localizes into a finite linear combination of delta functions. This has the remarkable consequence that the solution, to any initial value problem, at rational times is a finite linear combination of translates of the initial data and hence its value at any point on the periodic domain depends only  upon finitely many of the initial values.  In \cite{OSS},  the revival phenomenon for the linear free space Schr\"{o}dinger equation subject to pseudo-periodic boundary conditions was investigated, see also \cite{BFP} for the same model and for the quasi-periodic linear KdV equation. In \cite{BOPS}, a more general revival phenomenon, that produces dispersively quantized cusped solutions of the periodic Riemann problem for three linear integro-differential equations, including the Benjamin-Ono equation, the Intermediate Long Wave equation and the Smith equation were studied. More recently, these phenomena were shown to extend to multi-component dispersive equations, see \cite{YKQ}.  For a class of two-component linear systems of dispersive evolution equations, the dispersive quantization conditions, which may yield quantized structures for step-function initial value at rational times, are provided.

Inspired by these linear results, the phenomena of dispersive quantization and fractalization for the periodic Riemann problem for nonlinear dispersive evolution equations on periodic domains, including the integrable nonlinear Schr\"{o}dinger (NLS), KdV and modified KdV (mKdV) equations as well as non-integrable versions with higher-order nonlinearities were studied numerically in  \cite{CO14}. Erdo$\mathrm{\breve{g}}$an, Tzirakis and their collaborators established rigorous results on the fractalization for the nonlinear equations at a dense set of times. Quantifying the irrational time fractalization in terms of the estimate on the fractal dimension, their results, on the one hand extend the results of Oskolkov and Rodnianski to a class of nonlinear integer polynomial dispersive equations subject to initial data of bounded variation, and, on the other hand, confirm the numerical observations of fractalization in \cite{CO14}.
Erdo\'{g}an and Tzirakis studied the cubic NLS and KdV equations on a periodic domain with initial data of bounded variation in \cite{ET13-nls} and \cite{ET13-kdv}, respectively. Subsequently, together with Chousionis, they obtained  some results on the Minkowski dimension of the fractalization profiles for dispersive linear partial differential equations with monomial dispersion relation \cite{CET}. We refer the reader to the survey texts \cite{ES19, ET16} for irrational time fractalization results. See also the recent survey \cite{Smi}.

To date, investigations have almost all concentrated on unidirectional dispersive systems. In the present paper, we will show that the dispersive revival/fractalization rational/irrational dichotomy extends to bidirectional dispersive equations of the form
\begin{equation}\label{bi-eq-in}
u_{tt}=L[u],
\end{equation}
where $L$ is a scalar differential operator with constant coefficients. Obviously, equation \eqref{bi-eq-in} is equivalent to the following two-component evolutionary system
\begin{equation}\label{2comp-ev}
u_{t}=v,\qquad v_t=L[u],
\end{equation}
which, however, does not satisfy the dispersive quantization conditions given in \cite{YKQ}.
As we describe below, in the bidirectional setting, if we set the initial conditions equal to the same step function, then the solution of the corresponding periodic Riemann problem will exhibit  qualitative dispersive quantization behaviour, of a different form than the standard piecewise constant   solutions admitted by the unidirectional systems, such as the linear KdV and Schr\"{o}dinger equations and their associated multi-component generalizations.
Interestingly, in the concrete case of the linear beam equation
\begin{equation}\label{beam-eq}
u_{tt}+u_{xxxx}=0,
\end{equation}
these solutions  at rational times $t^\ast=\pi p/q$ with $q> 2$ appear to be piecewise parabolic, non-constant between jump discontinuities, whereas at the rational times $t_k^0=\pi(2k-1)/2$, the solution becomes a continuously differentiable curve, with analytical expression \eqref{sol-pi2}.

Similar studies were initiated in one of the authors' recent Ph.D. thesis \cite{Fa}, which studies the revival property in bidirectional dispersive equations \eqref{bi-eq-in} where the operator $L$ is an even-order poly-Laplacian, which includes the linear wave equation and the beam equation \eqref{beam-eq}, subject to periodic and quasi-periodic boundary conditions. We should further mention that from a general perspective, the form of the revival effect in the periodic bidirectional problems considered here resembles this of the revival effect in the free linear Schr\"{o}dinger equation with Robin boundary conditions $bu(t,0) = (1-b)u_{x}(t,\pi)$, where $b\in (0,1)$ is a parameter, see \cite{BFP}. Indeed, in both cases the solution at rational times is given as the sum of the revival of the initial condition and a more regular function, which can be considered as a weak type of revival. Other models in the literature that exhibit such weak revivals include the periodic cubic NLS and KdV equations \cite{ET13-nls,ET13-kdv}, the periodic linear Schr\"{o}dinger equation with periodic potential \cite{Rod-potential,CKKKS} and the linear Schr\"{o}dinger equation subject to Dirichlet boundary conditions \cite{BFP-Pot}.

The linear beam equation, which is studied in Section 2, is a typical example of a model with the dispersion relation of the form $\omega(k)=\pm k^N,\;2\leq N\leq \mathbb{Z}^+$, namely when $N=2$. Although it is a special case, it motivates the study of the more general case and illustrates the idea and the method in the proof. More importantly, through the analysis and derivation of its periodic initial-boundary value problem, we arrive at some classical results for the Riemann zeta function. It implies that  the periodic initial-boundary value problems for such systems  can provide an alternative mechanism for establishing such classical identities.

The three concrete goals of the present paper are as follows. The first is to investigate the new phenomenon of dispersive revival in greater detail, by examining the periodic initial-boundary value problems for bidirectional dispersive equations. We will provide an explicit characterization of the solution profiles of the periodic Riemann problem for the linear beam equation, leading to the general form of dispersive revival for bidirectional periodic initial-boundary value  problems with various dispersion relations, including  integral polynomial and non-polynomial. Our main results contain the analytic description of the new phenomena of the dispersive revival, which can be found in Section 2 for the linear beam equation, and Section 3 for general bidirectional equations, respectively. In the particular case of monomial dispersion relations, we present an alternative approach. Secondly, with the aim to show that such effects can persist into the nonlinear regime, we present numerical simulations, based on the Fourier spectral method, of the periodic Riemann problem for the nonlinear beam equation in Section 4. Numerical approximation supplies strong evidence that the dispersive revival/fractalization rational/irrational dichotomy persists into the nonlinear regime, whereas, when compared with the unidirectional systems, the nonlinear terms induce greater variations of the curve profiles, including their convexities.
Finally, in the course of our analysis, we find that the solutions at rational times of the periodic Riemann problem for bidirectional dispersive  equations with integral polynomial dispersion relations are closely related to identities for the Riemann zeta function, which is of great importance in analytic number theory. In summary, these new revival phenomena warrant further investigation, both mathematically, and to develop its profound applications.

\newpage

\section{Revival for the linear beam equation}

The starting point is the periodic initial-boundary value problem for the linear beam equation on the interval $0\leq x \leq 2\pi$:
\begin{equation}\label{ibv-beam}
\begin{cases}
u_{tt}+u_{xxxx}=0,\\
u(0, x)=f(x), \qquad \qquad  u_t(0, x)=g(x),\\
{\displaystyle\partial_{x}^{j}u(t,0) = \partial_{x}^{j}u(t,2\pi)\atop \displaystyle\partial_{x}^{j}\partial _tu(t,0) = \partial_{x}^{j}\partial _tu(t,2\pi),} \qquad  j =0, \ 1, \ 2, \ 3.
\end{cases}
\end{equation}
The linear beam equation is a bidirectional dispersive  equation modeling small vibrations of a thin elastic beam, with quadratic dispersion relation $\omega(k)=\pm k^2$. We focus our attention on the initial data given by a step function:
  \begin{equation}\label{iv-s}
 f(x)=g(x)=\sigma(x)=
\begin{cases}
-1 ,\qquad & 0\leq x<\pi,\\
1 ,\qquad & \pi\leq x<2\pi,
\end{cases}
\end{equation}known as the \emph{Riemann problem}.
Without further mention, here and elsewhere below, we assume that functions and distributions defined on $[\,0, \,2\pi \,]$ are extended $2\pi$-periodically to $\mathbb{R}$ in the usual way, when required.

For the solution of \eqref{ibv-beam} with initial data \eqref{iv-s}, we have the following result.

\begin{lemma}\label{beam-solut}
The periodic initial-boundary value problem \eqref{ibv-beam}-\eqref{iv-s} has the following solution
\begin{equation}\label{sol1-beam}
u(t, x)=-\frac{4}{\pi}\left(\sum_{n=0}^{+\infty}\frac{\cos((2n+1)^2 t)\sin((2n+1) x)}{2n+1}+\sum_{n=0}^{+\infty}\frac{\sin((2n+1)^2 t)\sin((2 n+1)x)}{(2n+1)^3}\right).
\end{equation}
\end{lemma}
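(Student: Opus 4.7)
The plan is to use separation of variables and the Fourier method. Since the boundary conditions are periodic of period $2\pi$, I would expand the candidate solution as a Fourier series
\begin{equation*}
u(t,x) = \sum_{n\in\mathbb{Z}} c_n(t)\,e^{inx},
\end{equation*}
or equivalently in real form with cosines and sines, where the coefficients $c_n(t)$ are unknown functions of time. Substituting into $u_{tt}+u_{xxxx}=0$ formally decouples the Fourier modes, so that each $c_n$ satisfies the constant-coefficient ODE $\ddot{c}_n + n^4 c_n = 0$, with general solution $c_n(t)=\alpha_n\cos(n^2 t)+\beta_n\sin(n^2 t)$ (and $c_0(t)=\alpha_0+\beta_0 t$ for the zero mode).

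Next, I would compute the Fourier series of the step function $\sigma$. A direct calculation of the coefficients $\frac{1}{\pi}\int_0^{2\pi}\sigma(x)\cos(nx)\,dx$ and $\frac{1}{\pi}\int_0^{2\pi}\sigma(x)\sin(nx)\,dx$ shows that all cosine coefficients and all even-index sine coefficients vanish, while the odd-index sine coefficients give the classical square-wave expansion
\begin{equation*}
\sigma(x) = -\frac{4}{\pi}\sum_{n=0}^{\infty}\frac{\sin\bigl((2n+1)x\bigr)}{2n+1}.
\end{equation*}
In particular the mean of $\sigma$ vanishes, so the zero mode of $u$ is identically zero.

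Matching the initial conditions $u(0,x)=u_t(0,x)=\sigma(x)$ mode by mode then pins down the unknown constants: for each odd $n=2k+1$ one reads off $\alpha_n=-\tfrac{4}{\pi n}$ from $u(0,x)=\sigma(x)$, and from $u_t(0,x)=\sigma(x)$ one gets $n^2\beta_n=-\tfrac{4}{\pi n}$, hence $\beta_n=-\tfrac{4}{\pi n^3}$. Substituting these back into the Fourier series yields exactly \eqref{sol1-beam}.

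The main obstacle, and the part that requires the most care, is the justification of this formal procedure: one needs to argue that the candidate series actually defines a function (in a suitably weak sense) that solves the problem, given the rough step-function initial data. The first series in \eqref{sol1-beam} has coefficients decaying only like $1/n$ and is merely the conditionally convergent Fourier series of a bounded-variation function, so term-by-term differentiation in $t$ or $x$ must be understood distributionally; the second series, however, has $1/n^3$ decay and is uniformly and absolutely convergent, producing a $C^1$ contribution, which is what powers the dichotomy to be exploited in the subsequent sections. Once the modewise uniqueness of the $L^2$-expansion and the distributional sense of \eqref{ibv-beam} are in place, the identification of the coefficients above completes the proof.
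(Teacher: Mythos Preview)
Your approach is correct and essentially identical to the paper's: the paper does not give an explicit proof of this lemma, treating it as a routine Fourier computation, but the general derivation in Section~3 (expand $u(t,x)=\sum a_k(t)e^{\i kx}$, obtain the decoupled ODEs $a_k''+\varphi(k)a_k=0$, solve, and match initial data) is exactly the separation-of-variables argument you outline. Your additional remarks on convergence and the distributional interpretation go slightly beyond what the paper states here, but are consistent with its later discussion of the two summations.
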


Throughout the paper, a time $t>0$ will be designated as \emph{rational} if $t/\pi \in \mathbb{Q}$, i.e., $t=t^\ast=\pi p/q$, with $p$ and $0\neq q\in \mathbb{Z}^+$ having no common factors. Otherwise, if $t/\pi \notin \mathbb{Q}$, the time is called \emph{irrational}.
To analyze the qualitative behavior of solution \eqref{sol1-beam} at the rational times, we invoke the following Lemma.

\begin{lemma}\label{beam-le1}
Given $j,q\in \mathbb{Z^+}$ with $q \ne 0$, let $\sigma^{j, q}(x)$ be the box function defined as
\begin{equation}\label{box}
\sigma^{j, q}(x)=
\begin{cases}
\,1,\qquad &\displaystyle\frac{\pi j}{q} \leq x < \frac{\pi(j+1)}{q},\quad 0\leq j\leq 2q-1,\\
\,0,\qquad &otherwise.
\end{cases}
\end{equation}
Let $N,\,p,\,q\in \mathbb{Z^+},\,N\geq 2,\,q\neq 0$. Then, the following formulae hold.

{\rm (}{\it i\/}{\rm )}
\begin{equation}\label{eq1-l1}-\frac{4}{\pi}\sum_{n=0}^{+\infty}\frac{\displaystyle\cos\left((2n+1)^N \frac{\pi p}{q}\right)}{2n+1}\sin((2n+1) x)=\sum_{j=0}^{2q-1}a_j\left(\frac{p}{q}\right)\sigma^{j, q}(x),\end{equation}

{\rm (}{\it ii\/}{\rm )}
For each even $N$,
\begin{equation}\label{eq2-l1}-\frac{4}{\pi}\sum_{n=0}^{+\infty}\frac{\displaystyle\sin\left((2n+1)^N \frac{\pi p}{q}\right)}{2n+1}\sin((2n+1) x)=\sum_{j=0}^{2q-1}b_j\left(\frac{p}{q}\right)\sigma^{j, q}(x),\end{equation}

{\rm (}{\it iii\/}{\rm )}
For each odd  $N$,
\begin{equation}\label{eq3-l1}-\frac{4}{\pi}\sum_{n=0}^{+\infty}\frac{\displaystyle\sin\left(((2n+1)^N \frac{\pi p}{q}\right)}{2n+1}\cos((2n+1) x)=\sum_{j=0}^{2q-1}\tilde{b}_j\left(\frac{p}{q}\right)\sigma^{j, q}(x),\end{equation}
where $a_j, b_j, \tilde{b}_j \in \mathbb{R}$, $j=0,\ldots, 2q-1$, are certain constants which depend on $p$ and $q$.

\end{lemma}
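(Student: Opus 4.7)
The plan is to exploit the periodicity in $n$ modulo $q$ of the coefficient $\cos\bigl((2n+1)^N\pi p/q\bigr)$. Indeed, by the binomial theorem $(2n+1+2q)^N \equiv (2n+1)^N \pmod{2q}$, so this coefficient depends only on $(2n+1) \bmod 2q$. Moreover, since cosine is even and $(-m)^N = \pm m^N$, the map $m\mapsto \cos(m^N\pi p/q)$ is an even function of the odd integer $m$. The first step is to expand it as a discrete Fourier series on $\mathbb{Z}/2q\mathbb{Z}$,
\begin{equation*}
\cos\bigl((2n+1)^N \pi p/q\bigr) = \sum_{l=0}^{2q-1} c_l(p/q)\, e^{i\pi l(2n+1)/q},
\end{equation*}
whose coefficients $c_l$ are real (the sequence is real) and satisfy the symmetry $c_l = c_{2q-l}$ (the sequence is even in $m$).

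Substituting this expansion into the left-hand side of \eqref{eq1-l1} and exchanging the finite sum over $l$ with the (conditionally convergent) sum over $n$, the computation reduces to evaluating the inner sums
\begin{equation*}
\sum_{n=0}^{\infty}\frac{e^{i\pi l(2n+1)/q}\sin((2n+1)x)}{2n+1}.
\end{equation*}
Applying the product-to-sum identities to $e^{i\pi l(2n+1)/q}\sin((2n+1)x)$, each inner sum decomposes into a real piece built from the shifted square waves $\sigma(x\pm\pi l/q)$, via the classical Fourier identity $\sum_{n\ge 0}\sin((2n+1)y)/(2n+1) = -(\pi/4)\sigma(y)$, plus an imaginary piece involving the conjugate series $\Psi(y):=\sum_{n\ge 0}\cos((2n+1)y)/(2n+1) = \frac{1}{2}\log\bigl|\cot(y/2)\bigr|$. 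The involution $l\mapsto 2q-l$, combined with the symmetry $c_l = c_{2q-l}$ and the $2\pi$-periodicity of both $\sigma$ and $\Psi$, forces the $\Psi$-contributions to cancel pairwise and collapses the weighted sum to
\begin{equation*}
-\frac{4}{\pi}\sum_{n=0}^{\infty}\frac{\cos\bigl((2n+1)^N\pi p/q\bigr)\sin((2n+1)x)}{2n+1} = \sum_{l=0}^{2q-1} c_l(p/q)\,\sigma(x-\pi l/q).
\end{equation*}
Each translate $\sigma(x-\pi l/q)$ is piecewise constant with jump discontinuities only on the grid $\{\pi j/q : 0\le j\le 2q-1\}$, so the finite sum is also piecewise constant on this grid; recording its constant values on the subintervals $[\pi j/q,\pi(j+1)/q)$ furnishes the real coefficients $a_j(p/q)$ of the box-function expansion \eqref{eq1-l1}.

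Parts (ii) and (iii) follow by the same mechanism with minor modifications. For (ii), $N$ is even and $\sin(m^N\pi p/q)$ is again an even function of $m$; the associated DFT coefficients share the symmetry of the $c_l$ and the argument of (i) applies verbatim, yielding the coefficients $b_j(p/q)$. For (iii), $N$ is odd and $\sin(m^N\pi p/q)$ is odd in $m$, giving purely imaginary DFT coefficients $d_l$ with the antisymmetry $d_l = -d_{2q-l}$; the pairing with $\cos((2n+1)x)$ instead of $\sin((2n+1)x)$ exchanges the roles of $\sigma$ and $\Psi$ in the product-to-sum reduction, and the antisymmetry of $d_l$ now cancels the $\Psi$-terms, leaving a real linear combination of translates of $\sigma$ that expands in the box functions $\sigma^{j,q}$ with real coefficients $\tilde b_j(p/q)$.

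The principal technical hurdle throughout is precisely the cancellation of the $\Psi$-contribution. Since $\Psi(y)$ is unbounded near the lattice points $y\in\pi\mathbb{Z}$, no piecewise constant right-hand side could accommodate it; the parity symmetries of the discrete Fourier coefficients, inherited directly from the parity of $m\mapsto\cos(m^N\pi p/q)$ or $\sin(m^N\pi p/q)$, are precisely what enforce the cancellation and produce the piecewise constant structure asserted in \eqref{eq1-l1}--\eqref{eq3-l1}.
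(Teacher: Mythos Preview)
Your argument is correct, but it takes a genuinely different route from the paper's. The paper does not compute directly with the series at all; instead it invokes Theorem~\ref{thm-olv1} (the Chen--Olver dispersive quantization theorem) as a black box. Concretely, it considers the two unidirectional evolutions $u^{\pm}(t,x)=\sum_k c_k\,e^{\i(kx\pm k^N t)}$ with step-function initial data, notes that by Theorem~\ref{thm-olv1} each $u^{\pm}(t^\ast,\,\cdot\,)$ is piecewise constant on the grid $\{\pi j/q\}$ at rational times $t^\ast$, and then simply forms $u^{+}+u^{-}$ and $u^{+}-u^{-}$ to isolate exactly the three trigonometric series in \eqref{eq1-l1}--\eqref{eq3-l1}. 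This is very short once Theorem~\ref{thm-olv1} is available, and it treats all three parts uniformly without any explicit Fourier identities or cancellation arguments. Your approach, by contrast, is self-contained: it bypasses Theorem~\ref{thm-olv1} entirely, replacing it with a discrete Fourier expansion of the $2q$-periodic coefficient together with the classical square-wave identity and a parity-based cancellation of the logarithmic conjugate series $\Psi$. What you gain is an elementary, stand-alone proof that exposes the mechanism (the $\Psi$-cancellation) explicitly and could in principle yield closed formulae for the constants $a_j,b_j,\tilde b_j$; what you lose is brevity, since the paper's two-line reduction to Theorem~\ref{thm-olv1} absorbs all of that work into the cited result.
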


To prove Lemma \ref{beam-le1},  we need the following theorem, which is based on Theorem 3.2 and Corollary 3.4 in \cite{CO12}, and underlies the dispersive quantization effect for equations with ``integral polynomial'' dispersion relation.

\begin{definition}\label{in-po}
A polynomial $P(k)=c_0+c_1k+\cdots+c_Nk^N$ is called an integral polynomial if its coefficients are integers: $c_i\in \mathbb{Z}, i=0, \ldots, N$.
\end{definition}

\begin{theorem}\label{thm-olv1}
Suppose that the dispersion relation of the evolution equation $u_t=L[u]$ is an integral polynomial\/{\rm:}
\begin{equation*}
\omega(k)= P(k).
\end{equation*}
 Then at every rational time $t^\ast=\pi p/q$, with $p$ and $0\neq q\in \mathbb{Z}$, the fundamental solution
\begin{equation*}
F(t, x)\sim \frac{1}{2\pi}\sum_{k=-\infty}^{\infty}e^{\i(k x+\omega(k)t)}
\end{equation*}
is  a linear combination of $q$ periodically extended delta functions concentrated at the rational nodes $x_j=\pi j/q$ for $j\in \mathbb{Z}$. Moreover, at $t^\ast=\pi p/q$, the solution profile to the periodic initial-boundary value problem for $u_t=L[u]$ is  a linear combination of $\leq 2q$ translates of its initial data $u(0, x)=f(x)$, i.e.,
\begin{equation*}
u\left(\frac{\pi p}{q}, x\right)=\sum_{j=0}^{2q-1}a_j\left(\frac{p}{q}\right)f\left(x-\frac{\pi j}{q}\right).
\end{equation*}

\end{theorem}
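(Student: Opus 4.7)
The plan is to analyze the Fourier series of the fundamental solution at $t^{\ast}=\pi p/q$ and exploit the integrality of $P$ to collapse it into a finite linear combination of delta functions at rational nodes; the solution formula then follows by convolution.

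First, I would expand the fundamental solution as
$$F(t^{\ast},x)\sim \frac{1}{2\pi}\sum_{k\in\mathbb{Z}} e^{\i k x}\,e^{\i \pi p P(k)/q}.$$
The decisive observation is that, since $P(k)=c_0+c_1k+\cdots+c_Nk^N$ has integer coefficients, the binomial expansion
$$P(k+2q)-P(k)=\sum_{j=0}^{N}c_j\sum_{i=1}^{j}\binom{j}{i}k^{j-i}(2q)^{i}$$
is divisible by $2q$, so the factor $e^{\i\pi p P(k)/q}$ is a $2q$-periodic function of $k\in\mathbb{Z}$. This is precisely where the integer-coefficient hypothesis enters; it fails as soon as any $c_j\notin\mathbb{Z}$.

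Next I would split the sum over $k$ into residue classes modulo $2q$, writing $k=r+2qm$ with $0\le r\le 2q-1$ and $m\in\mathbb{Z}$, and apply the Poisson summation formula to the inner sum,
$$\sum_{m\in\mathbb{Z}} e^{2\i q m x}=\frac{\pi}{q}\sum_{n\in\mathbb{Z}}\delta\!\left(x-\frac{\pi n}{q}\right).$$
Combining these two steps produces a finite sum of delta functions at the nodes $x_j=\pi j/q$, $j=0,1,\ldots,2q-1$, with explicit coefficients
$$a_j\!\left(\frac{p}{q}\right)=\frac{1}{2q}\sum_{r=0}^{2q-1}e^{\i\pi p P(r)/q}\,e^{\i \pi jr/q},$$
which are finite Gauss-type exponential sums depending only on $p,q$; in particular, because $P$ generically has a definite parity pattern modulo $2q$, typically only about $q$ of the coefficients survive, matching the statement.

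Finally, for a general initial datum $f$, the solution of $u_t=L[u]$ is $u(t,x)=\bigl(F(t,\cdot)\ast f\bigr)(x)$, so convolution at $t=t^{\ast}$ with the discrete measure above yields
$$u\!\left(\frac{\pi p}{q},x\right)=\sum_{j=0}^{2q-1} a_j\!\left(\frac{p}{q}\right) f\!\left(x-\frac{\pi j}{q}\right),$$
as claimed. The main technical obstacle is that the Fourier series for $F$ converges only in the sense of tempered distributions, so the formal manipulations above must be justified distributionally. I would handle this by introducing a regularization (for instance the Gaussian factor $e^{-\varepsilon k^{2}}$ in the sum over $k$), performing the residue-class splitting and Poisson summation rigorously for each $\varepsilon>0$, and passing to the limit $\varepsilon\to 0^{+}$ in $\mathcal{D}'(\mathbb{S}^{1})$; equivalently, one checks that both sides have identical Fourier coefficients as periodic distributions on the circle.
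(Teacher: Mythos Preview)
The paper does not actually prove this theorem; it is quoted from \cite{CO12} (Theorem~3.2 and Corollary~3.4 there) and invoked as a black box to establish Lemma~2.2. Your argument is correct and is essentially the proof given in that reference: the $2q$-periodicity of $k\mapsto e^{\i\pi pP(k)/q}$ coming from the integrality of the coefficients, the residue-class splitting modulo $2q$, and the Dirac comb identity are exactly the ingredients Chen and Olver use. One minor comment: your remark that ``typically only about $q$ of the coefficients survive'' is a bit loose, since the precise count depends on the parity structure of $P$ modulo $2q$ and on whether $p$ or $q$ is even (cf.\ the discussion in \cite{OT18}); but this does not affect the validity of the proof, which only needs the bound $\le 2q$.
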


\begin{remark}
 Slightly more generally, the dispersion relation can be a nonzero multiple of an integral polynomial. By suitably rescaling time, the stated result holds, the only difference being which times are designated as rational or irrational.  A similar remark holds if one rescales space to consider the equation on a different spatial interval.
\end{remark}

\begin{proof}[\bf{Proof of Lemma \ref{beam-le1}}]
According to Theorem \ref{thm-olv1}, if the underlying equation admits a dispersion relation
$\omega(k)=\pm k^N$, and the initial data is the unit step function
\begin{equation*}
 u(0, x)=
\begin{cases}
0 ,\qquad & 0\leq x<\pi,\\
1 ,\qquad & \pi\leq x<2\pi,
\end{cases}
\end{equation*}
with Fourier coefficients
\begin{eqnarray*}
c_k=
\begin{cases}
\,\displaystyle\frac{1}{2},\qquad &k=0,\\
\,0,\qquad &k\neq0\; \mathrm{even},\\
\, \displaystyle\frac{\i}{\pi k},\qquad &k\; \mathrm{odd}.
\end{cases}
\end{eqnarray*}
Then, at a rational time  $t^\ast=\pi p/q$,  the corresponding solution has the Fourier series form
\begin{equation*}
u^\pm(t, x)=\sum_{k=-\infty}^{+\infty}c_ke^{\i (kx\pm k^N t)}
\end{equation*}
and hence is constant on every subinterval $\pi j/q< x<\pi(j+1)/q$, for $j=0, \ldots, 2q-1$, namely
\begin{equation}\label{sol-l1}
u^\pm(t^\ast, x)=\sum_{j=0}^{2q-1}\gamma_j^\pm\left(\frac{p}{q}\right)\sigma^{j, q}(x),
\end{equation}
for certain $\gamma_j^\pm\in \mathbb{C}$, $j=0, \ldots, 2q-1$, dependent on $p$ and $q$.
We thus need to distinguish two cases:

 {\bf Case 1.}\quad $N$ is even. It is easy to see that
\begin{eqnarray*}\begin{aligned}
u^+(t^\ast, x)+u^-(t^\ast, x)&=\sum_{k=-\infty}^{+\infty}c_k\left(e^{\i (kx+ k^N t^\ast)}+e^{\i (kx- k^N t^\ast)}\right)\\
&=1-\frac{4}{\pi}\sum_{n=0}^{+\infty}\frac{\cos\left((2n+1)^N t^\ast\right)}{2n+1}\sin((2n+1) x),\\
u^+(t^\ast, x)-u^-(t^\ast, x)&=\sum_{k=-\infty}^{+\infty}c_k\left(e^{\i (kx+ k^N t^\ast)}-e^{\i (kx- k^N t^\ast)}\right)\\
&=-\frac{4\i}{\pi}\sum_{n=0}^{+\infty}\frac{\sin\left((2n+1)^N t^\ast\right)}{2n+1}\sin((2n+1) x).
\end{aligned}\end{eqnarray*}

 {\bf Case 2.}\quad $N$ is odd. Similar to the above, we have
\begin{eqnarray*}\begin{aligned}
u^+(t^\ast, x)+u^-(t^\ast, x)&=\sum_{k=-\infty}^{+\infty}c_k\left(e^{\i (kx+ k^N t^\ast)}+e^{\i (kx- k^N t^\ast)}\right)\\
&=1-\frac{4}{\pi}\sum_{n=0}^{+\infty}\frac{\cos\left((2n+1)^N t^\ast\right)}{2n+1}\sin((2n+1) x),\\
u^+(t^\ast, x)-u^-(t^\ast, x)&=\sum_{k=-\infty}^{+\infty}c_k\left(e^{\i (kx+ k^N t^\ast)}-e^{\i (kx- k^N t^\ast)}\right)\\
&=-\frac{4}{\pi}\sum_{n=0}^{+\infty}\frac{\sin\left((2n+1)^N t^\ast\right)}{2n+1}\cos((2n+1) x).
\end{aligned}\end{eqnarray*}

Finally, these formulae, together with \eqref{sol-l1} yield \eqref{eq1-l1}-\eqref{eq3-l1}, respectively, proving the lemma.

\end{proof}

Denote the solution \eqref{sol1-beam} as
\begin{eqnarray}\label{beam-solut-1}
u(t,x):=\mathrm{I}(t, x)+\mathrm{II}(t, x).
\end{eqnarray}
Note that  \eqref{eq1-l1} implies that the first summation  in \eqref{beam-solut-1} evaluated at rational times $t^\ast$ has the representation
\begin{equation}\label{eq-s1-l1}
\mathrm{I}(t^\ast, x)=\sum_{j=0}^{2q-1}a_j\left(\frac{p}{q}\right)\sigma^{j, q}(x),
\end{equation}
for certain constants $a_0, \ldots, a_{2q-1}$ determined by  \eqref{eq1-l1}.
In particular, if $q=2$, at the corresponding specific rational time $t^0_k=\pi(2k-1)/2,\,k\in \mathbb{Z^+}$, it vanishes identically.

When it comes to  $\mathrm{II}(t^\ast, x)$, firstly, a direct computation shows that for any $t>0$ we have that
\begin{eqnarray}\begin{aligned}\label{eq-s2-l1}
&\frac{\sin((2n+1)^2 t)\sin((2 n+1)x)}{(2n+1)^3}\\
&\hskip.7in =-\int_{0}^{x}\int_{0}^{y}\frac{\sin((2n+1)^2t)\sin((2n+1)z)}{2n+1}\;\mathrm{d}z\mathrm{d}y+\frac{\sin((2n+1)^2t)}{(2n+1)^2} x.
\end{aligned}\end{eqnarray}
Next, again thanks to Lemma 2.1, we find that the first  of the two components in $\mathrm{II}(t, x)$ satisfies,
\begin{eqnarray}\begin{aligned}\label{II(1)-t1}
\mathrm{II}^{(1)}(t^\ast, x)&=\frac{4}{\pi}\int_{0}^{x}\int_{0}^{y}\sum_{n=0}^{+\infty}\frac{\sin((2n+1)^2 t^\ast)\sin((2n+1)z)}{2n+1}\;\mathrm{d}z\mathrm{d}y\\
&=-\int_{0}^{x}\int_{0}^{y}\sum_{j=0}^{2q-1}b_j\left(\frac{p}{q}\right)\sigma^{j, q}(z)\;\mathrm{d}z\mathrm{d}y,
\end{aligned}\end{eqnarray}
for certain constants $b_j,\,j=0,\ldots, 2q-1$ determined by \eqref{eq2-l1}.
We denote
\begin{equation*}
F(y)=\int_{0}^{y}\sum_{j=0}^{2q-1}b_j\sigma^{j, q}(z)\;\mathrm{d}z,\quad\mathrm{for}\quad 0\leq y\leq x, \end{equation*}
and set
\begin{equation*}
H(x)=\int_{0}^{x}F(y)\mathrm{d}y. \end{equation*}
It is easy to verify that
\begin{equation*}
F(y)=
\begin{cases}
b_0 y, \quad \quad &0\leq y\leq \frac{\pi }{q},\\
\displaystyle b_j y+\frac{\pi}{q}\sum\limits_{m=0}^{j-1}b_m-\frac{\pi}{q}j b_j, \quad  & \displaystyle\frac{\pi}{q}j\leq y \leq \frac{\pi}{q}(j+1),\quad j=1,\ldots, 2q-1,
\end{cases}
\end{equation*}
and hence
\begin{equation}\label{Hx}
H(x)
=
\begin{cases}
\frac{1}{2}b_0 x^2, \quad \quad &\displaystyle 0\leq x\leq \frac{\pi }{q},\ostrut0{13}\\
\frac{1}{2}b_1 x^2+\displaystyle\frac{\pi}{q}(b_0-b_1)x+\frac{\pi^2}{2q^2}(b_1-b_0),\quad &\displaystyle\frac{\pi }{q}\leq x\leq \frac{2\pi }{q}, \ostrut0{15}\\
\frac{1}{2}b_j x^2+h_1x+h_0,\quad &\displaystyle\frac{\pi }{q}j\leq x\leq \frac{\pi }{q}(j+1), \quad j=1,\ldots, 2q-1,
\end{cases}
\end{equation}
where
\begin{equation*}
h_1=\frac{\pi}{q}\left(\sum_{m=0}^{j-1}b_m-mb_m\right)\quad \mathrm{and}\quad h_0=\frac{\pi^2}{q^2}\left[\>\sum_{m=1}^{j-1}\left(\sum\limits_{i=0}^{m-1}b_i+\frac{b_m}{2}\right)+\frac{j^2}{2}b_j-j\sum_{m=0}^{j-1}b_m+\frac{b_0}{2}\>\right].\end{equation*}

This, when combined with \eqref{eq-s1-l1} and \eqref{eq-s2-l1}, allows us to arrive at the exact result for the
 solution \eqref{beam-solut} at the rational times,  which is summarized in the following theorem.

\begin{theorem}\label{thm-beam}
At a rational time $t^\ast=\pi p/q$, the solution to the  periodic initial-boundary value problem \eqref{ibv-beam} for the linear beam equation with the step function  initial datum \eqref{iv-s} takes the form
\begin{equation}\label{sol-ra-beam}
u(t^\ast, x)=\sum_{j=0}^{2q-1}a_j\left(\frac{p}{q}\right)\sigma^{j, q}(x)-H(x)+C(t^\ast)x,
\end{equation}
where
\begin{equation}\label{C}
C(t^\ast)=-\frac{4}{\pi}\sum_{n=0}^{+\infty}\frac{\sin((2n+1)^2t^\ast)}{(2n+1)^2},
\end{equation}
$\sigma^{j, q}(x)$ is the box function defined in \eqref{box}, $H(x)$ is the piecewise quadratic function defined in \eqref{Hx}, and $a_j$, $j=0,\ldots, 2q-1$, are certain constants determined by equation \eqref{eq1-l1}.
\end{theorem}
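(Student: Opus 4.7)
The plan is to split the series \eqref{sol1-beam} according to the decomposition $u(t,x)=\mathrm{I}(t,x)+\mathrm{II}(t,x)$ of \eqref{beam-solut-1} and handle each summand at the rational time $t^{\ast}=\pi p/q$ using Lemma~\ref{beam-le1}. The first summand is immediate: since the dispersion relation of the linear beam equation is the integral monomial $\omega(k)=\pm k^{2}$ (so $N=2$), the series $\mathrm{I}(t^{\ast},x)$ is exactly the left-hand side of \eqref{eq1-l1}, and Lemma~\ref{beam-le1}(i) identifies it with the piecewise constant combination $\sum_{j=0}^{2q-1}a_{j}(p/q)\sigma^{j,q}(x)$ appearing as the first term of \eqref{sol-ra-beam}.

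The obstacle is the second summand $\mathrm{II}(t^{\ast},x)$: its Fourier coefficients decay like $(2n+1)^{-3}$ rather than $(2n+1)^{-1}$, so Lemma~\ref{beam-le1}(ii) cannot be applied termwise. The trick is the algebraic identity \eqref{eq-s2-l1}, which is nothing more than two successive integrations by parts in the space variable acting on $\sin((2n+1)z)$; it rewrites each summand as a double integral of the lower-order term $\sin((2n+1)^{2}t^{\ast})\sin((2n+1)z)/(2n+1)$ plus a pure boundary contribution $\sin((2n+1)^{2}t^{\ast})\,x/(2n+1)^{2}$. Summing in $n$ and multiplying by $-4/\pi$, the boundary terms produce exactly $C(t^{\ast})x$ with $C$ as in \eqref{C}. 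For the remaining double-integral piece, the interchange of summation and integration is legitimate because after the two integrations the resulting series has $O(1/n^{3})$ coefficients and is absolutely and uniformly convergent, while the pointwise identification of its integrand is furnished by Lemma~\ref{beam-le1}(ii) with $N=2$. This gives
\begin{equation*}
\mathrm{II}(t^{\ast},x)=-\int_{0}^{x}\!\!\int_{0}^{y}\sum_{j=0}^{2q-1}b_{j}(p/q)\,\sigma^{j,q}(z)\,dz\,dy+C(t^{\ast})\,x.
\end{equation*}

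It remains only to verify that the double integral equals the piecewise quadratic function $H(x)$ of \eqref{Hx}, which I would do by direct piecewise integration over the nodes $x=\pi j/q$. The inner integration yields the piecewise linear antiderivative $F(y)$ displayed in the paper, in which the constants $(\pi/q)\sum_{m<j}b_{m}-(\pi/q)jb_{j}$ are forced by continuity at the breakpoints; integrating once more and again matching the pieces at the nodes produces the coefficients $h_{0},h_{1}$ of \eqref{Hx}. Substituting back into $\mathrm{II}(t^{\ast},x)$ and combining with $\mathrm{I}(t^{\ast},x)$ yields the stated formula \eqref{sol-ra-beam}. The single delicate point in the whole argument is this continuity bookkeeping across the $2q$ subintervals; everything else is a mechanical application of Lemma~\ref{beam-le1} together with the integration-by-parts identity \eqref{eq-s2-l1}.
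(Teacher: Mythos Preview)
Your proposal is correct and follows essentially the same route as the paper: the argument preceding the theorem statement splits $u=\mathrm{I}+\mathrm{II}$, applies \eqref{eq1-l1} to $\mathrm{I}$, uses the identity \eqref{eq-s2-l1} to reduce $\mathrm{II}$ to a double integral plus the linear boundary term $C(t^{\ast})x$, invokes \eqref{eq2-l1} on the integrand, and then computes $F$ and $H$ piecewise exactly as you describe. The only cosmetic slip is calling \eqref{eq-s2-l1} ``two integrations by parts''; it is simply two antidifferentiations of $\sin((2n+1)z)$ from $0$, but the content is the same.
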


With the explicit expression \eqref{sol1-beam} of the solution in hand, we now analyze its qualitative behaviour. First of all,  as a direct corollary of the estimate on the solution of the linearized dispersion equation given by Oskolkov \cite{Osk92} and Rodnianski \cite{Rod} ---  see also \cite{ET16} --- one has, for almost all irrational $t/(2\pi)$, the first summation
$\mathrm{I}\in\bigcap_{\epsilon>0}C^{\frac{1}{2}-\epsilon}$, which  in turn indicates that the second one $\mathrm{II}\in\bigcap_{\epsilon>0}C^{\frac{5}{2}-\epsilon}$. We thus conclude that, at the irrational times, the profile of the solution \eqref{beam-solut} is a continuous fractal, with fractal dimension $D=3/2$. When it comes to the rational times, note that in  the expression of the solution \eqref{sol-ra-beam}, on the one hand, $H(x)\in C^1$, on the other hand, $\mathrm{I}(t^\ast, x)$ is a piecewise constant with $\leq 2q$ discontinuities, apart from the specific times $t^0_k=\pi(2k-1)/2, \,k\in\mathbb{Z^+}$, which will result in $\mathrm{I}(t^0_k, x)\equiv0$. Therefore, we need to distinguish two cases:

{\bf Case 1.} \; $q=2$. At times $t^0_k=\pi(2k-1)/2, \,k\in\mathbb{Z^+}$,
\begin{equation*}
u(t^0_k, x)=-H(x)+(-1)^k\frac{4}{\pi}\sum_{n=0}^{+\infty}\frac{1}{(2n+1)^2}x\in C^1.
\end{equation*}
Referring back to the relation \eqref{eq2-l1} corresponding to $q=2, p=2k-1$, and $N=2$,  and solving for $b_j$ by making use of the  inverse discrete Fourier transform (IDFT) gives rise to
\begin{equation}\label{b-2-pi2}
b_0=b_1=(-1)^{k},\quad b_2=b_3=(-1)^{k-1}.
\end{equation}
Then, \eqref{Hx} readily leads to
\begin{eqnarray}
\begin{aligned}\label{Hx-ex1}
H_k(x)
&=(-1)^{k-1}
\begin{cases}
-\frac{1}{2} x^2, \quad \quad &0\leq x \leq \pi,\\
\frac{1}{2}x^2-2\pi x+\pi^2,\quad &\pi \leq x \leq 2\pi,
\end{cases}
\end{aligned}\end{eqnarray}
which, together with the fact that the solution \eqref{sol1-beam} is $2\pi$-periodic, i.e.,  $u(t^0_k, 0)=u(t^0_k, 2\pi)$, yields
\begin{equation}\label{C-pi2}
\sum_{n=0}^{+\infty}\frac{1}{(2n+1)^2}=\frac{\pi^2}{8}.
\end{equation}
It follows that, evaluated at each  $t^0_k=(2k-1)\pi/2, \,k\in\mathbb{Z^+}$,  the solution  can be written as
\begin{eqnarray}\begin{aligned}\label{sol-pi2}
u(t^0_k, x)
=(-1)^{k-1}
\begin{cases}
\frac{1}{2} (x^2-\pi x), \quad \quad &0\leq x \leq\pi,\\
-\frac{1}{2}(x^2-3\pi x+2\pi^2),\quad &\pi \leq x \leq 2\pi.
\end{cases}
\end{aligned}\end{eqnarray}
More interestingly, we find that the conclusion \eqref{C-pi2} agrees with the classical result
\begin{equation}\label{zeta-2}
\zeta (2) = \sum_{n=0}^{+\infty}\frac{1}{n^2}=\frac{\pi^2}{6},
\end{equation}
where
\begin{equation}\label{zeta}
\zeta(s)=\sum_{n=0}^{+\infty}\frac{1}{n^s},\quad s>1
\end{equation}
is the Riemann zeta function. The above procedure provides an alternative mechanism for establishing such classical identities, and the behavior of these solutions at rational times has intriguing connections with such number-theoretic exponential sums.

{\bf Case 2.}  $q\neq 2$. According to \eqref{sol-ra-beam}, at each rational time $t^\ast=\pi p/q, \,q\neq 2$,  the solution consists of a piecewise constant function, which is constant on the intervals  $\pi j/q< x< \pi(j+1)/q$ for $j=0, \ldots, 2q-1$, combined with a continuously differentiable function $-H(x)+C(t^\ast)x$, being composed of different parabolas defined on the intervals  $\pi j/q\leq x< \pi(j+1)/q, \ j=0, \ldots, 2q-1$.
Therefore, in the present case,  the solution profile is a discontinuous, piecewise parabolic curve.

For instance, let us take $t^\ast=\pi/3$ as an example.
On the one hand,
\begin{equation*}
\mathrm{I}\left(\frac{\pi}{3}, x\right)=-\frac{4}{\pi}\sum_{n=0}^{+\infty}\frac{\cos((2n+1)^2 \frac{\pi}{3})}{2n+1}\sin((2n+1) x)=\sum_{j=0}^{5}a_j\left(\frac{1}{3}\right)\sigma^{j, q}(x),\end{equation*}
where, a direct computation through IDFT shows that
\begin{equation*}
a_0=a_2=a_3=a_5=0,\quad a_1=-1,\quad a_4=1.\end{equation*}
On the other hand, $b_j\,( 0\leq j\leq 5)$ in $H(x)$ are obtained from the relation
\begin{equation*}
-\frac{4}{\pi}\sum_{n=0}^{+\infty}\frac{\sin((2n+1)^2 \frac{\pi}{3})}{2n+1}\sin((2n+1) x)=\sum_{j=0}^{5}b_j\left(\frac{1}{3}\right)\sigma^{j, q}(x).\end{equation*}
We have
\begin{equation*}
b_0=b_2=-b_3=-b_5=-\frac{\sqrt{3}}{3},\quad b_1=-b_4=-\frac{2\sqrt{3}}{3}.\end{equation*}
Again, owing to the periodicity of the solution, one has
\begin{equation}\label{R-series}
\sum_{n=0}^{+\infty}\frac{\sin((2n+1)^2 \frac{\pi}{3})}{(2n+1)^2}=-\frac{\pi}{4}C\left(\frac{\pi}{3}\right)=\frac{\sqrt{3}\,\pi^2}{18}.
\end{equation}
Finally, inserting them into \eqref{Hx} and then \eqref{sol-ra-beam} gives rise to the explicit solution at time $t^\ast=\pi/3$, namely
\begin{eqnarray}\begin{aligned}\label{sol-pi3-1}
u\left(\frac{\pi}{3}, x\right)
=
\begin{cases}
\frac{\sqrt{3}}{6} \left(x^2-\frac{4\pi}{3} x\right), \quad \quad &0\leq x\leq \frac{\pi}{3},\\
\frac{\sqrt{3}}{3} \left(x^2-\pi x+\frac{\pi^2}{18}\right)-1, \quad \quad &\frac{\pi}{3}\leq x\leq\frac{2\pi}{3},\\
\frac{\sqrt{3}}{6} \left(x^2-\frac{2\pi}{3} x-\frac{\pi^2}{3}\right), \quad \quad &\frac{2\pi}{3}\leq x\leq\pi,\\
-\frac{\sqrt{3}}{6} \left(x^2-\frac{10\pi}{3} x+\frac{7\pi^2}{3}\right), \quad \quad &\pi\leq x\leq\frac{4\pi}{3},\\
-\frac{\sqrt{3}}{3} \left(x^2-3\pi x+\frac{37\pi^2}{18}\right)+1, \quad \quad &\frac{4\pi}{3}\leq x\leq\frac{5\pi}{3},\\
-\frac{\sqrt{3}}{6} \left(x^2-\frac{8\pi}{3} x+\frac{4\pi^2}{3}\right), \quad \quad &\frac{5\pi}{3}\leq x\leq 2\pi.
\end{cases}
\end{aligned}\end{eqnarray}

\begin{remark}
Note that, at the rational points, the series \eqref{R-series} contains the odd terms of Riemann's non-differentiable function, which was introduced by Riemann in 1872; see \cite{Dui} for details. The connection between the Riemann's non-differentiable function and solutions of the vortex filament equation with polygonal initial data was recently established in \cite{HKV1, HKV2}.
\end{remark}

All in all, in the context of the linear beam equation,  the evolution of the periodic step function initial datum will take on three different qualitative behaviors. At irrational times, it evolves into continuous but non-differentiable fractal-like profile.  At rational times $t^\ast=\pi p/q\,(q\neq 2)$, the solution takes on a discontinuous, piecewise parabola behavior.  On the other hand, at each specific rational time $t^0_k=\pi(2k-1)/2, \,k\in\mathbb{Z^+}$, the quantization effect disappears entirely, and the solution instantly becomes a continuously differentiable function, emerging at regular $\pi$-periodic intervals. We conclude that
the revival phenomena exhibited by the periodic evolution of the linear beam equation differs from that arising in the linear KdV, the linear Schr\"{o}dinger, and other unidirectional linear dispersive evolution equations studied previously.

\begin{figure}[H]
    \centering
    \subfigure[$t=\pi/2$]{
    \includegraphics[width=0.3\textwidth]{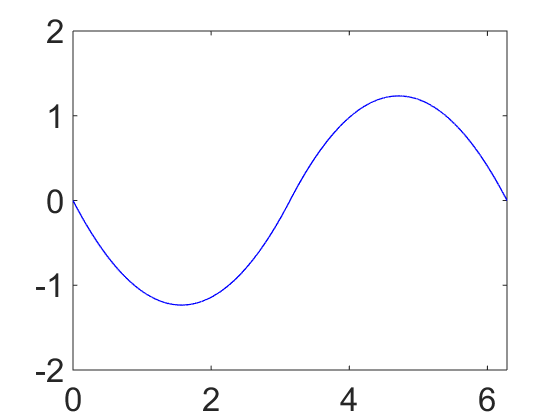}
}
    \subfigure[$t=\pi/3$]{
    \includegraphics[width=0.3\textwidth]{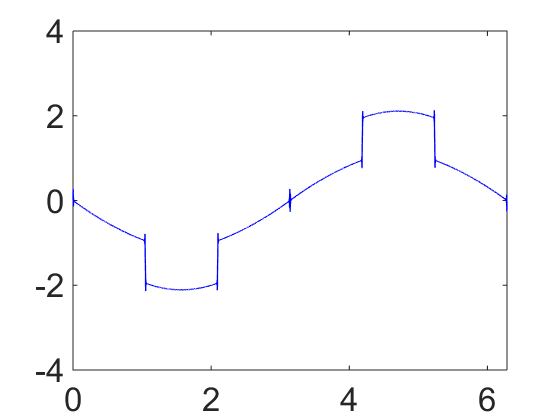}
}
    \subfigure[$t=\pi/5$]{
    \includegraphics[width=0.3\textwidth]{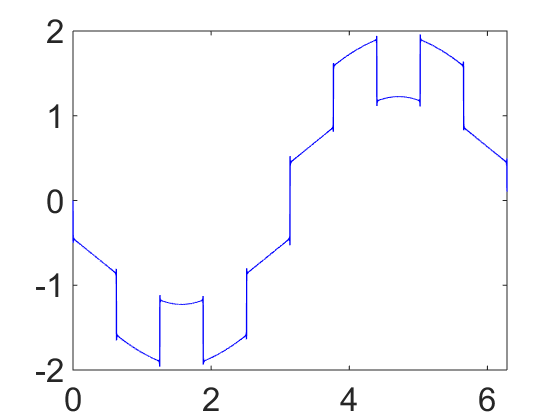}
}

     \caption{{\small The solutions  to the periodic initial-boundary value problem for the linear beam equation at rational times.}}
     \label{lin-beam-ra}
     \end{figure}

     \begin{figure}[H]
    \centering
    \subfigure[$t=0.1$]{
    \includegraphics[width=0.3\textwidth]{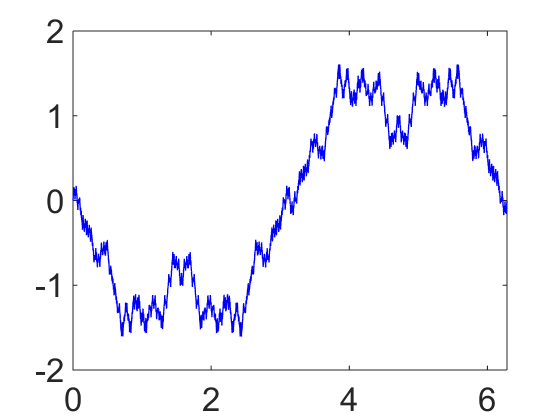}
}
    \subfigure[$t=0.3$]{
    \includegraphics[width=0.3\textwidth]{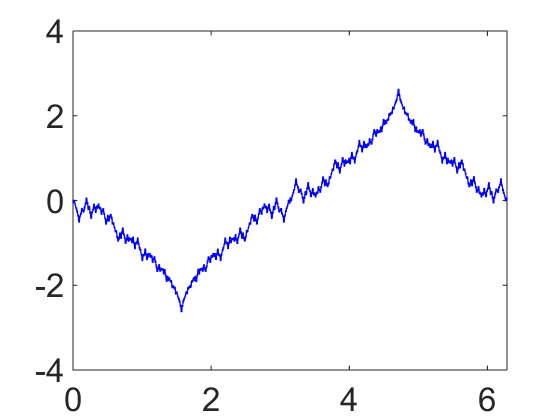}
}
    \subfigure[$t=0.5$]{
    \includegraphics[width=0.3\textwidth]{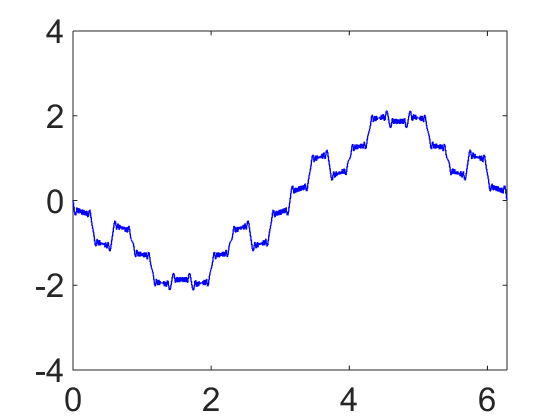}
}

     \caption{{\small The solutions  to the periodic initial-boundary value problem for the  linear beam equation at irrational times. }}
     \label{lin-beam-irra}
     \end{figure}

 In  Figures \ref{lin-beam-ra} and \ref{lin-beam-irra}, we display the graphs of the solution at some representative rational and   irrational times, respectively. These figures are plotted by straightforwardly applying the Fourier series representation of $u(t, x)$ given by \eqref{sol1-beam}. We sum over 1001 terms\footnote{Summing over a larger number of terms produces no appreciable difference in the solution profiles.} to obtain the numerical approximation of the solution. As illustrated in Figure \ref{lin-beam-ra}(a), the solution is  continuous at $\pi/2$. While, referring to Figure \ref{lin-beam-ra}(b) and  Figure \ref{lin-beam-ra}(c), it appears that, at $\pi/3$ and $\pi/5$, there exist a finite number of jump discontinuities, and between which parabolic curves of different form arise.  Obviously, the plots in Figure \ref{lin-beam-ra}, which obtained by simply truncating the Fourier series \eqref{sol1-beam}, are entirely consistent with the explicit expressions given by \eqref{sol-pi2} and \eqref{sol-pi3-1}. On the other hand, Figure \ref{lin-beam-irra} shows that, at irrational times, the solution displays continuous, but nowhere differentiable fractal-like profiles, as claimed above.

\begin{remark}\label{beam}
It is worth mentioning that, in view of the series \eqref{eq-s1-l1} and that in $\mathrm{II}^{(1)}(t^\ast, x)$, the distribution of the discontinuity points in $H(x)$ depends on the value of $q$, especially on its parity. As studied in \cite{OT18}, general speaking, the piecewise subintervals for these series are $\pi j/q\leq x< \pi(j+1)/q, j=0, \ldots, 2q-1$. However,   if $q$ is even (for instance $q=2$, whose corresponding solution is a representative example which can manifest such characteristic),  the solutions sometimes assume identical values on adjacent subintervals, and so exhibits larger regions of constancy. See also \cite{OT18} for a number-theoretic characterization of these occurrences.
\end{remark}

    \begin{figure}[H]
    \centering
    \subfigure[$t=\pi/2$]{
    \includegraphics[width=0.3\textwidth]{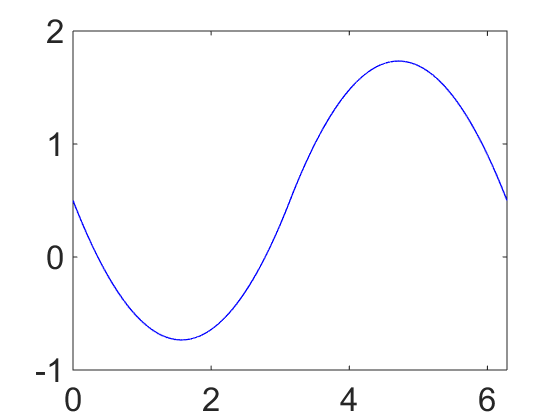}
}
    \subfigure[$t=\pi/3$]{
    \includegraphics[width=0.3\textwidth]{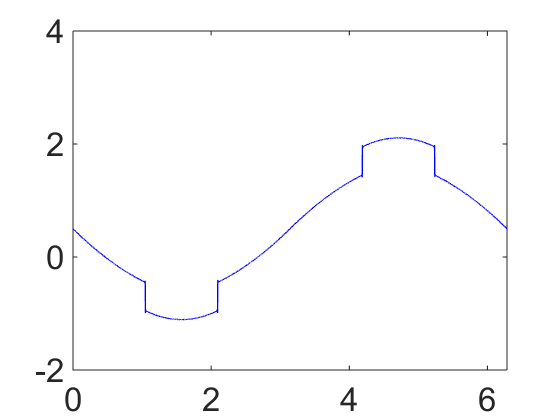}
}
    \subfigure[$t=\pi/5$]{
    \includegraphics[width=0.3\textwidth]{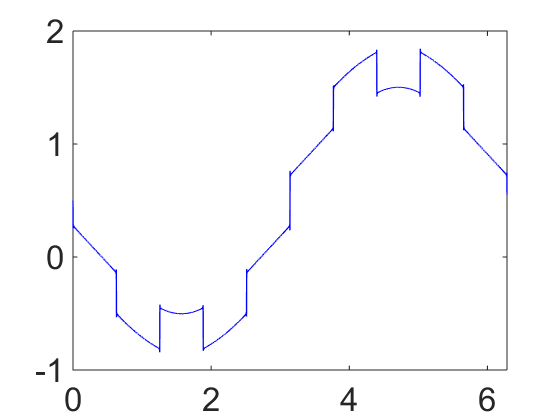}
}\\
 \subfigure[$t=0.1$]{
    \includegraphics[width=0.3\textwidth]{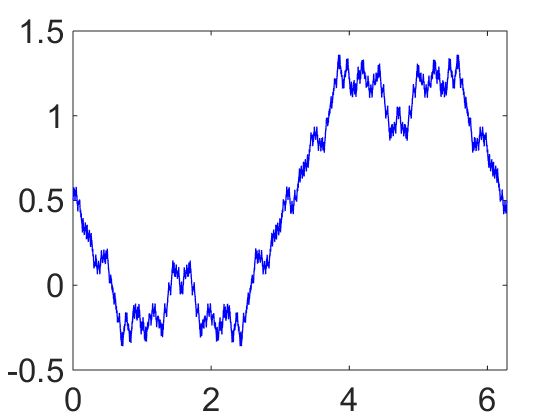}
}
    \subfigure[$t=0.3$]{
    \includegraphics[width=0.3\textwidth]{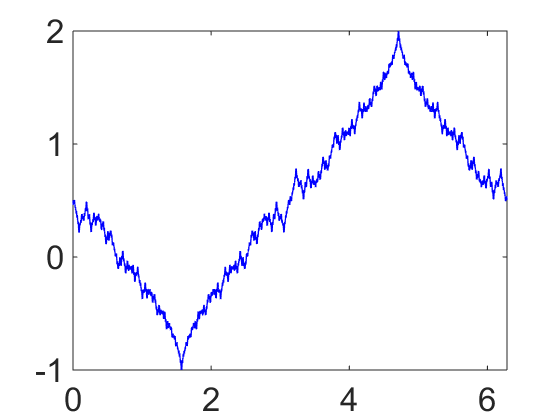}
}
    \subfigure[$t=0.5$]{
    \includegraphics[width=0.3\textwidth]{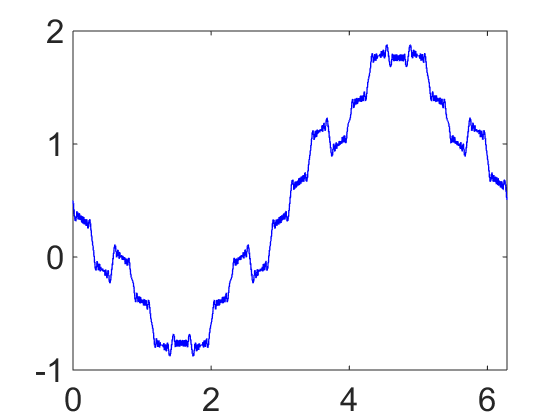}
}

     \caption{{\small The solutions  to the periodic initial-boundary value problem \eqref{ibv-beam} with initial data $f(x)=\tilde{\sigma}(x),\,g(x)=\sigma(x)$. }}
     \label{lin-beam-f01}
     \end{figure}

     \begin{figure}[H]
    \centering
    \subfigure[$t=\pi/2$]{
    \includegraphics[width=0.3\textwidth]{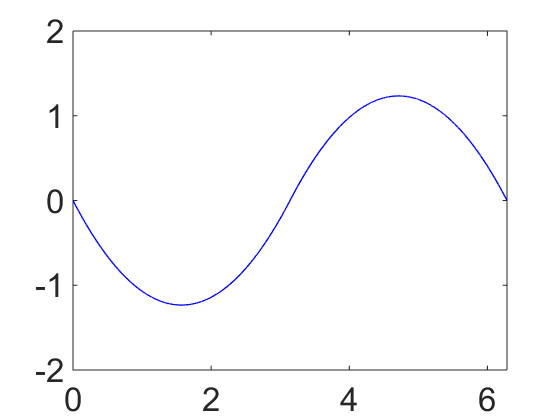}
}
    \subfigure[$t=\pi/3$]{
    \includegraphics[width=0.3\textwidth]{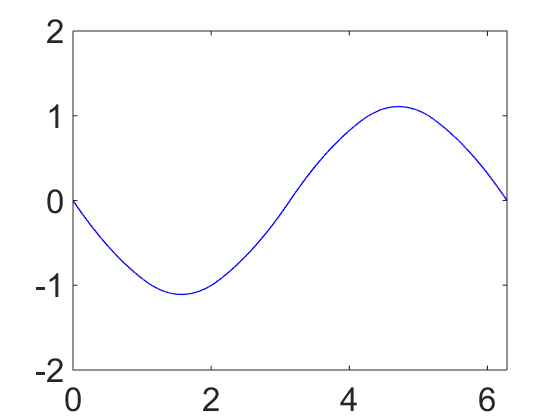}
}
    \subfigure[$t=\pi/5$]{
    \includegraphics[width=0.3\textwidth]{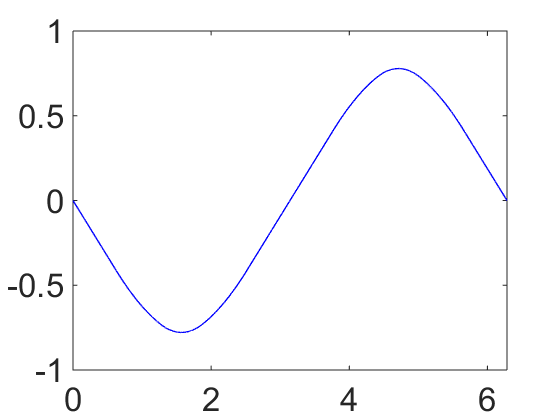}
}\\
 \subfigure[$t=0.1$]{
    \includegraphics[width=0.3\textwidth]{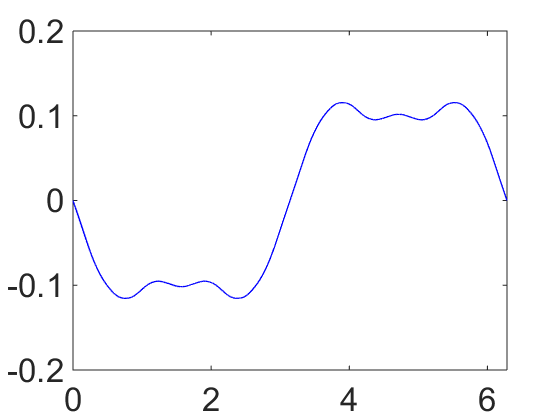}
}
    \subfigure[$t=0.3$]{
    \includegraphics[width=0.3\textwidth]{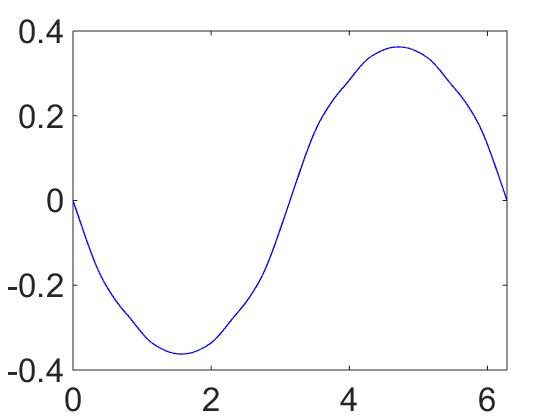}
}
    \subfigure[$t=0.5$]{
    \includegraphics[width=0.3\textwidth]{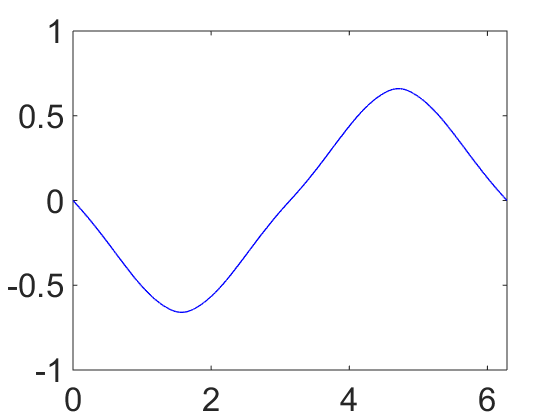}
}

     \caption{{\small The solutions  to the periodic initial-boundary value problem \eqref{ibv-beam} with initial data $f(x)=0,\,g(x)=\sigma(x)$. }}
     \label{lin-beam-f0}
     \end{figure}

 \begin{figure}[H]
    \centering
    \subfigure[$t=\pi/2$]{
    \includegraphics[width=0.3\textwidth]{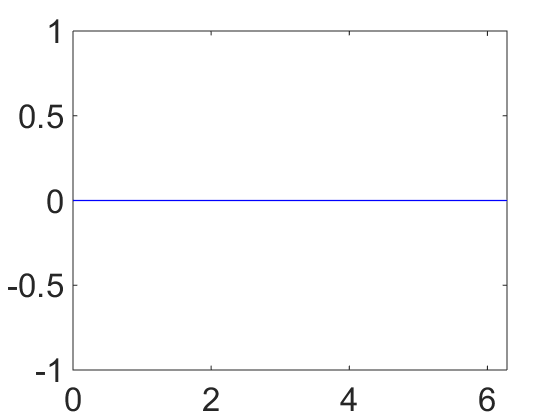}
}
    \subfigure[$t=\pi/3$]{
    \includegraphics[width=0.3\textwidth]{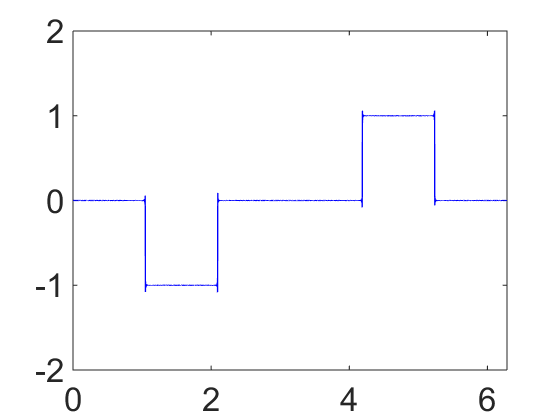}
}
    \subfigure[$t=\pi/5$]{
    \includegraphics[width=0.3\textwidth]{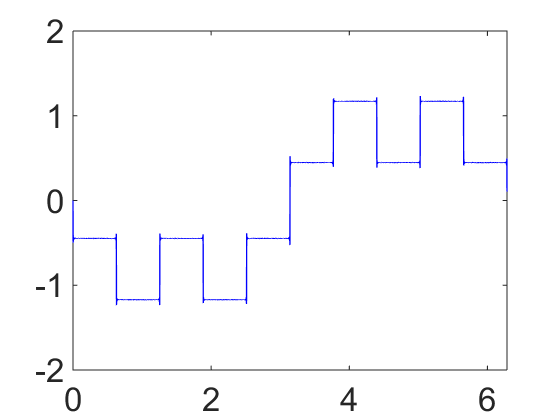}
}\\
 \subfigure[$t=0.1$]{
    \includegraphics[width=0.3\textwidth]{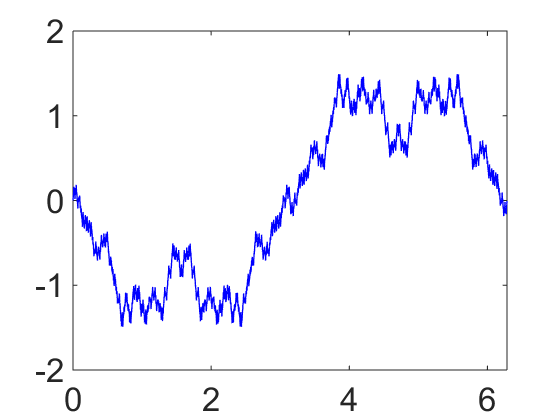}
}
    \subfigure[$t=0.3$]{
    \includegraphics[width=0.3\textwidth]{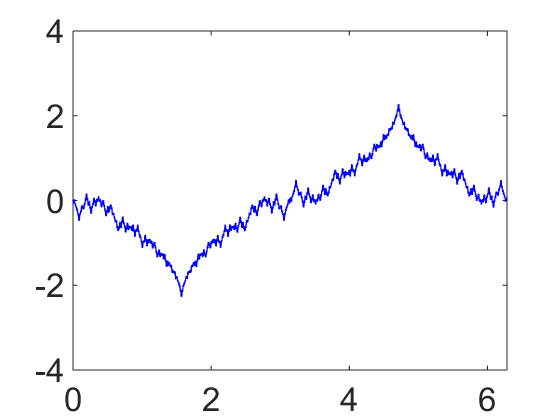}
}
    \subfigure[$t=0.5$]{
    \includegraphics[width=0.3\textwidth]{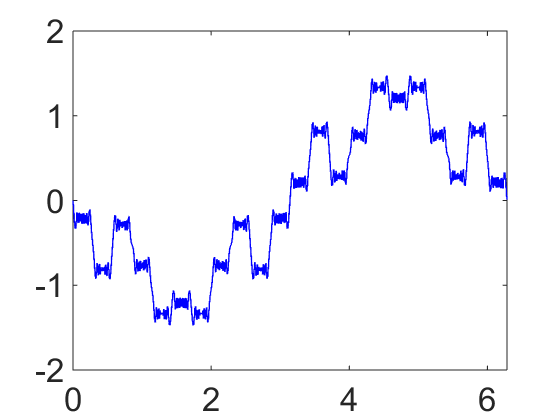}
}

     \caption{{\small The solutions  to the periodic initial-boundary value problem \eqref{ibv-beam} with initial data $f(x)=\sigma(x),\,g(x)=0$. }}
     \label{lin-beam-g0}
     \end{figure}

Formula \eqref{sol1-beam} provides the solution of \eqref{ibv-beam} with the same step function $f=g=\sigma(x)$ \eqref{iv-s} as the initial data. Indeed, in view of  Theorem \ref{thm-ge-eq} below, we find that the first and second terms in solution \eqref{sol1-beam} are induced by the initial data $u|_{t=0}=f(x)$  and $u_{t}|_{t=0}=g(x)$, respectively. It is noticed that the emergence of such a dichotomy phenomenon is not only for the case $f=g$. For instance, if we take the initial $g(x)=\sigma(x)$, while
 \begin{equation*}
 f(x)=\tilde{\sigma}(x)=
\begin{cases}
0 ,\qquad & 0\leq x<\pi,\\
1 ,\qquad & \pi\leq x<2\pi.
\end{cases}
\end{equation*}
 Figure \ref{lin-beam-f01} suggests that the different steps functions will also evolve into three different qualitative behaviors. Furthermore, in Figure \ref{lin-beam-f0} and Figure \ref{lin-beam-g0}, we display the graphs of solutions corresponding to $f(x)=0,\,g(x)=\sigma(x)$, and $f(x)=\sigma(x),\,g(x)=0$, respectively.  As demonstrated in Figure \ref{lin-beam-f0}, if $f(x)=0$, the dispersive quantization induced by $f(x)$ dispears entirely, and then the solution will retain a $C^1$ profile all the time. On the other hand, if  $f(x)=\sigma(x),\,g(x)=0$, referring to Figure \ref{lin-beam-g0}(b) and Figure \ref{lin-beam-g0}(c), the solutions take on dispersive quantization at the rational times. While, as shown in Figure \ref{lin-beam-g0}(a), the solution will vanish at $\pi/2$, since $\mathrm{I}(\pi/2, x)\equiv0$ as claimed above.

\section{Revival for bidirectional dispersive equations}

In the preceding section, we concentrated on the periodic initial-boundary problem for the linear beam equation. In this section,  we will seek to extend our analysis to bidirectional dispersive equations possessing more general dispersion relations,  as before, subject to general initial conditions and periodic boundary conditions.

Let $L$ be a scalar, constant coefficient integro-differential operator with real-valued Fourier transform
$\widehat{L}(k)=-\varphi(k)$, where $\varphi(k)>0$ is a real-valued function of $k$. The associated bidirectional scalar equation
\begin{equation}\label{ge-eq}
u_{tt}=L[u]\end{equation}
has real dispersion relation $\omega(k)=\pm \sqrt{\varphi(k)}$. We subject \eqref{ge-eq} to periodic boundary conditions posed on the interval $0\leq x \leq 2\pi$, and initial conditions
\begin{equation}\label{ge-ic}
u(0, x)=f(x),\quad  u_t(0, x)=g(x),
\end{equation}
where $f(x)$ and $g(x)$ are of bounded variation, and $g(x)$ is required to satisfy $\int_0^{2\pi} g(x)\;\mathrm{d}x=0$.

As usual, the first step is to construct the (formal) solution as a Fourier series
$$u(t,x) \sim \sum_{k=-\infty}^{+\infty}a_k(t)e^{\i kx}.$$
 To this end, we first expand the initial data $f(x)$ and $g(x)$ in Fourier series
\begin{equation*}\label{f-fs1}
f(x)\sim \sum_{k=-\infty}^{+\infty}c_ke^{\i k x},\quad \mathrm{where}\quad c_k=\widehat{f}(k)=\frac{1}{2\pi}\int_0^{2\pi} \, f(x)e^{-\i k x}\,\mathrm{d}x
\end{equation*}
and
\begin{equation*}\label{f-fs2}
g(x)\sim \sum_{k=-\infty}^{+\infty}d_ke^{\i k x},\quad \mathrm{where}\quad d_k=\widehat{g}(k)=\frac{1}{2\pi}\int_0^{2\pi} \, g(x)e^{-\i k x}\,\mathrm{d}x.
\end{equation*}
Next, an analogous analysis as used for the linear beam equation with step function initial conditions, implies that the corresponding coefficients $a_k(t)$ satisfy the following linear ODE
\begin{equation}\label{ak-ode-ge}
a_k''(t)+\varphi(k)a_k(t)=0.
\end{equation}
Solving it yields
\begin{equation*}
a_k(t)=A_ke^{\i \sqrt{\varphi(k)}t}+B_ke^{-\i \sqrt{\varphi(k)} t}.
\end{equation*}
Finally, using the initial data again, we find that the solution to the periodic initial-boundary value problem \eqref{ge-eq}-\eqref{ge-ic} is given by
\begin{eqnarray}\begin{aligned}\label{sol-ge-eq}
u(t, x)=\sum_{k}\widehat{f}(k)\cos\left(\sqrt{\varphi(k)}\> t\right)e^{\i kx}+\sum_{k\neq 0}\frac{\widehat{g}(k)}{\sqrt{\varphi(k)}}\sin\left(\sqrt{\varphi(k)}\>t\right)e^{\i kx}.
\end{aligned}\end{eqnarray}

With the Fourier series representation \eqref{sol-ge-eq} in hand, we are now able to analyze the qualitative behavior of the solution at rational times. We will show that the dynamical evolution of equation \eqref{ge-eq} on periodic domains with initial profiles \eqref{ge-ic} depends dramatically upon the asymptotics of the dispersion relation at large wave number. In all cases considered here, the large wave number asymptotics of the dispersion relation is given by a positive power of the wave number:
\begin{equation}
\sqrt{\varphi(k)}\sim \vert k\vert ^\alpha,\quad 2\leq \alpha\in\mathbb{R}, \quad\mathrm{as}\quad  \vert k\vert\rightarrow \infty.
\end{equation}

\subsection{Monomial dispersion relation:  }
As the first step, we will study the special case of monomial dispersion relation given by
\begin{equation}\label{monomial}
\omega(k)=\pm k^N,\quad 2\le N\in\mathbb{Z}^+.
\end{equation}
  The main results for the corresponding solutions are summarized in Theorem \ref{thm-ge-eq} below.
Hereafter, we define the operator $\partial_x^{-1}$  by the formula
\begin{equation}\label{minus1}
\partial_x^{-1}\,P(x)=\int_{2(k-1)\pi}^x\,P(y)\, \mathrm{d}y,\qquad x\in [ \,2(k-1)\pi, \,2k\pi \,].
\end{equation}
 We further define its $M$-th order power $\partial_x^{-M}$ via the recursive relation $\partial_x^{-M}=\partial_x^{-1}\partial_x^{-(M-1)}$, for $M\geq 1$.

\begin{theorem}\label{thm-ge-eq}
Suppose that equation \eqref{ge-eq} has the monomial dispersion relation \eqref{monomial}, the initial data $f(x)$ and $g(x)$  in \eqref{ge-ic} are of bounded variation, and $g(x)$ satisfies
$\int_0^{2\pi} g(x)\;\mathrm{d}x=0$. Let $G(x)=\partial_x^{-N}g(x)$. Then at each rational time $t^\ast=\pi p/q$, the solution to the periodic initial-boundary value problem  \eqref{ge-eq}-\eqref{ge-ic} takes the form
\begin{equation}\label{sol-ra-ge-eq}
u(t^\ast, x)=\sum_{j=0}^{2q-1}a_j\left(\frac{p}{q}\right)\,f\left(x-\frac{\pi j}{q}\right)+\i^N \sum_{j=0}^{2q-1}b_j\left(\frac{p}{q}\right)\,G\left(x-\frac{\pi j}{q}\right)+\sum_{j=0}^{N-1}C_jx^j,\end{equation}
\begin{equation}\label{Cj}
C_j=\frac{\i^j}{j\,!}\sum_{k\neq 0}\frac{\widehat{g}(k)\sin(k^Nt^\ast)}{k^{N-j}},
\end{equation}
where the coefficients $a_j, \,b_j\in \mathbb{C}$, $j=0,\ldots, 2q-1$, are  constants depending on $p$ and $q$.
\end{theorem}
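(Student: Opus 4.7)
Begin from the Fourier representation \eqref{sol-ge-eq}. For the monomial dispersion $\omega(k)=\pm k^N$ we have $\sqrt{\varphi(k)}=|k|^N$, and the parity identities $\cos(|k|^N t)=\cos(k^N t)$ and $\sin(|k|^N t)/|k|^N=\sin(k^N t)/k^N$ allow us to write $u(t,x)=\mathrm{I}(t,x)+\mathrm{II}(t,x)$ with
\begin{equation*}
\mathrm{I}(t,x)=\sum_{k}\widehat{f}(k)\cos(k^N t)\,e^{\i kx},\qquad \mathrm{II}(t,x)=\sum_{k\neq 0}\frac{\widehat{g}(k)\sin(k^N t)}{k^N}\,e^{\i kx}.
\end{equation*}
The overall strategy is to reduce the analysis at a rational time to two applications of Theorem \ref{thm-olv1}: directly for $\mathrm{I}$, and, after removing the obstructing factor $1/k^N$, for $\mathrm{II}$.

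For $\mathrm{I}$, Euler's formula splits $\cos(k^N t)=\tfrac12(e^{\i k^N t}+e^{-\i k^N t})$, whence $\mathrm{I}=\tfrac12(u^++u^-)$, with $u^{\pm}(t,x)=\sum_{k}\widehat{f}(k)\,e^{\i(kx\pm k^N t)}$ being the unidirectional evolutions of $f$ driven by the integral polynomial dispersion relations $\pm k^N$. Theorem \ref{thm-olv1} applied separately to $u^+$ and $u^-$ at $t^\ast=\pi p/q$ expresses each as a finite linear combination of $2q$ translates of $f$, and averaging yields the first sum in \eqref{sol-ra-ge-eq} with $a_j=\tfrac12(\alpha_j^++\alpha_j^-)$.

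For $\mathrm{II}$, the Fourier coefficients carry an extra factor $1/k^N$ that prevents a direct appeal to Theorem \ref{thm-olv1}. The plan is to isolate this obstruction through the Taylor expansion with integral remainder
\begin{equation*}
e^{\i kx}=\sum_{j=0}^{N-1}\frac{(\i kx)^j}{j!}+\frac{(\i k)^N}{(N-1)!}\int_0^x(x-y)^{N-1}e^{\i ky}\,\mathrm{d}y.
\end{equation*}
Substituting into the series for $\mathrm{II}(t^\ast,x)$ and interchanging orders, the Taylor-polynomial part produces exactly $\sum_{j=0}^{N-1}C_j x^j$ with the coefficients \eqref{Cj}, whereas the remainder part becomes
\begin{equation*}
\frac{\i^N}{(N-1)!}\int_0^x(x-y)^{N-1}\,S(t^\ast,y)\,\mathrm{d}y,\qquad S(t^\ast,y):=\sum_{k}\widehat{g}(k)\sin(k^N t^\ast)\,e^{\i ky},
\end{equation*}
where adjoining $k=0$ costs nothing since $\sin 0=0$. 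Writing $\sin$ as a difference of exponentials expresses $S=\tfrac{1}{2\i}(v^+-v^-)$, with $v^{\pm}$ the unidirectional Fourier evolutions of $g$ under dispersion $\pm k^N$, so that Theorem \ref{thm-olv1} delivers $S(t^\ast,y)=\sum_{j=0}^{2q-1}b_j(p/q)\,g(y-\pi j/q)$. By Cauchy's repeated-integration identity $\tfrac{1}{(N-1)!}\int_0^x(x-y)^{N-1}h(y)\,\mathrm{d}y=\partial_x^{-N}h(x)$ on $[0,2\pi]$, interpreted piecewise via \eqref{minus1}, the integrated remainder is recognised as $\i^N\sum_{j=0}^{2q-1}b_j(p/q)\,G(x-\pi j/q)$. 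Combining with $\mathrm{I}$ yields \eqref{sol-ra-ge-eq}.

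The main technical hurdle lies in the last identification: $\partial_x^{-N}[g(\cdot-\pi j/q)](x)$ and $G(x-\pi j/q)$ differ by a polynomial of degree $\leq N-1$ arising from the mismatch between the fixed lower limit of integration in \eqref{minus1} and the spatial translation, so the Taylor-polynomial contribution and this translation discrepancy must be tracked in tandem. A convenient way to confirm that the net polynomial is exactly the one prescribed by \eqref{Cj} is to match both sides of \eqref{sol-ra-ge-eq} Fourier mode by mode; after this reconciliation, uniqueness of the polynomial remainder of degree $<N$ forces the claimed explicit form.
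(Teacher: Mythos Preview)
Your proposal follows essentially the same route as the paper's proof: split $u=\mathrm{I}+\mathrm{II}$ using \eqref{sol-ge-eq}, handle $\mathrm{I}$ via Lemma~\ref{Revival Representations} (equivalently Theorem~\ref{thm-olv1} applied to $u^\pm$), and for $\mathrm{II}$ use the identity relating $e^{\i kx}/k^N$ to $\partial_x^{-N}e^{\i kx}$ plus a degree-$(N-1)$ polynomial --- your Taylor remainder with Cauchy kernel is exactly the paper's formula \eqref{parJ}. The one genuine difference is in the last step: the paper applies \eqref{eq2-l2} to the \emph{delta function} to obtain $\sum_j b_j\,\delta(x-\pi j/q)$ and then convolves with $G$, landing directly on $\sum_j b_j\,G(x-\pi j/q)$; you instead apply the revival to $S(t^\ast,\cdot)$ (i.e.\ to $g$), obtaining $\sum_j b_j\,g(\cdot-\pi j/q)$, and only then antidifferentiate, which forces you to confront the commutator $[\partial_x^{-N},\text{translation}]$ and the attendant polynomial mismatch. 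The paper's ordering sidesteps that bookkeeping, while yours makes the source of the polynomial ambiguity more visible; your Fourier-mode matching to pin down the remainder is legitimate but terse, and you could equally well adopt the paper's delta-then-convolve order to avoid it.
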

The proof of the theorem relies on the following lemma, which is a direct corollary of Theorem 3.2 established in \cite{CO12} and is a  special case of Lemmas 7.5 and 7.6 in \cite{Fa}. Thus, we omit the proof. Moreover, we remark that the expression \eqref{sol-ra-ge-eq} for the exact solution is equivalent to \eqref{sol-ra-ge-eq 2} in the next subsection, although not exactly the same in form.

\begin{lemma}\label{Revival Representations}
Let $P(k)$ be an integral polynomial.
Assume that $f(x)$ is of bounded variation, and let $\widehat{f}(k)$ be the Fourier coefficient of $f(x)$, i.e.,
\begin{equation*}
\widehat{f}(k)=\frac{1}{2\pi}\int_0^{2\pi} \, f(x)e^{-\i k x}\,\mathrm{d}x.
\end{equation*}
Given $t^\ast=\pi p/q$, with $p$ and $0\neq q\in \mathbb{Z}^+$, there exist  constants $a_j^1, a_j^2\in \mathbb{C}$, $j=0,\ldots, 2q-1$, depending on $p$ and $q$, such the following two formulae hold:\\
\begin{equation}\label{eq1-l2}\sum_{k=-\infty}^{\infty}\widehat{f}(k)\cos\left(P(k) t^\ast\right) e^{\i kx}=\sum_{j=0}^{2q-1}a_j^1\left(\frac{p}{q}\right)f\left(x-\frac{\pi j}{q}\right),\end{equation}
\begin{equation}\label{eq2-l2}\sum_{k=-\infty}^{\infty}\widehat{f}(k)\sin\left(P(k) t^\ast\right) e^{\i kx}=\sum_{j=0}^{2q-1}a_j^2\left(\frac{p}{q}\right)f\left(x-\frac{\pi j}{q}\right).\end{equation}
\end{lemma}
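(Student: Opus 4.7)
The plan is to reduce both identities to the quantization result already encoded in Theorem \ref{thm-olv1}, by expressing cosine and sine in terms of complex exponentials and applying that theorem once with dispersion relation $+P(k)$ and once with $-P(k)$.

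First, I would invoke Theorem \ref{thm-olv1} with $\omega(k)=P(k)$, which is an integral polynomial by assumption. The solution to the periodic initial value problem for $u_t = L[u]$ with $\widehat{L}(k)=\i P(k)$ and initial data $f(x)$ is
\begin{equation*}
u(t,x) \sim \sum_{k=-\infty}^{\infty} \widehat{f}(k)\, e^{\i(kx+P(k)t)}.
\end{equation*}
Evaluating at $t^\ast=\pi p/q$, Theorem \ref{thm-olv1} yields constants $\alpha_j(p/q)\in\mathbb{C}$ with
\begin{equation*}
\sum_{k=-\infty}^{\infty} \widehat{f}(k)\, e^{\i P(k) t^\ast}\, e^{\i kx}
= \sum_{j=0}^{2q-1}\alpha_j\!\left(\frac{p}{q}\right) f\!\left(x-\frac{\pi j}{q}\right).
\end{equation*}
Next, I would observe that $-P(k)$ is also an integral polynomial. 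Applying Theorem \ref{thm-olv1} a second time, now with dispersion relation $\omega(k)=-P(k)$, produces a second set of constants $\beta_j(p/q)\in\mathbb{C}$ such that
\begin{equation*}
\sum_{k=-\infty}^{\infty} \widehat{f}(k)\, e^{-\i P(k) t^\ast}\, e^{\i kx}
= \sum_{j=0}^{2q-1}\beta_j\!\left(\frac{p}{q}\right) f\!\left(x-\frac{\pi j}{q}\right).
\end{equation*}

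With these two identities in hand, I would recover the desired formulae by the elementary decompositions $\cos\theta=\tfrac{1}{2}(e^{\i\theta}+e^{-\i\theta})$ and $\sin\theta=\tfrac{1}{2\i}(e^{\i\theta}-e^{-\i\theta})$ applied pointwise in $k$ to the Fourier series. Adding the two displayed identities and dividing by $2$ yields \eqref{eq1-l2} with
\begin{equation*}
a_j^1\!\left(\frac{p}{q}\right) = \frac{1}{2}\Bigl[\alpha_j\!\left(\frac{p}{q}\right)+\beta_j\!\left(\frac{p}{q}\right)\Bigr],
\end{equation*}
while subtracting and dividing by $2\i$ yields \eqref{eq2-l2} with
\begin{equation*}
a_j^2\!\left(\frac{p}{q}\right) = \frac{1}{2\i}\Bigl[\alpha_j\!\left(\frac{p}{q}\right)-\beta_j\!\left(\frac{p}{q}\right)\Bigr].
\end{equation*}

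The only subtle point is the justification for interchanging the finite combinatorial reduction with the infinite Fourier sum when $f$ is merely of bounded variation. I would handle this the same way the quantization result is used throughout the paper: the Fourier coefficients of a BV function satisfy $\widehat{f}(k)=O(1/|k|)$, so each of the two exponential series above converges in the sense of distributions (and in fact in $L^2$), which is precisely the regime in which Theorem \ref{thm-olv1} is applied here and in \cite{CO12}. This is the only place where the BV hypothesis plays a role, and I expect no further obstacle beyond it, since once the two exponential identities are established, the derivation of \eqref{eq1-l2}--\eqref{eq2-l2} is purely algebraic.
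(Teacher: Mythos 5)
Your proof is correct and matches the paper's intended argument: the paper omits the proof of this lemma, citing it as a direct corollary of Theorem 3.2 of \cite{CO12} (restated here as Theorem \ref{thm-olv1}), and its proof of the special case Lemma \ref{beam-le1} uses exactly your device of applying that theorem to both $+P(k)$ and $-P(k)$ and then taking the sum and the difference divided by $2\i$. Your closing remark on convergence for bounded-variation data is likewise consistent with how the paper treats these series.
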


\begin{proof} [\bf{Proof of Theorem \ref{thm-ge-eq}}]

First of all, according to \eqref{sol-ge-eq}, under the assumption of the theorem,  the solution to the corresponding periodic initial-boundary problem has the form
\begin{eqnarray}\begin{aligned}\label{sol-ge-eq-N}
u(t, x)&=\sum_{k}\widehat{f}(k)\cos(k^N t)e^{\i kx}+\sum_{k\neq 0}\frac{\widehat{g}(k)\sin(k^N t)}{k^N}e^{\i kx}:=\mathrm{I}(t, x)+\mathrm{II}(t, x).
\end{aligned}\end{eqnarray}
Furthermore, since $f(x)$ and $g(x)$ are of bounded variation and $N\geq 2$,  the first summation in expression \eqref{sol-ge-eq-N} is conditionally convergent, and the second one is absolutely convergent.

At the rational times $t^\ast=\pi p/q$, by Lemma 3.2, the first summation is a linear combination of translates of $f(x)$, i.e.,
\begin{equation*}
\mathrm{I}(t^\ast, x)=\sum_{j=0}^{2q-1}a^1_j\left(\frac{p}{q}\right)\,f\left(x-\frac{\pi j}{q}\right),
\end{equation*}
for certain $a_0^1, \ldots, a_{2q-1}^1\in \mathbb{C}$ determined by \eqref{eq1-l2} with $P(k)=k^N$.

Note that
\begin{equation}\label{parJ}
\partial_x^{-M}e^{\i kx}=\frac{1}{(\i k)^M}e^{\i kx}-\sum_{j=0}^{M-1}\frac{1}{j !\>(\i k)^{M-j}}x^j, \quad {\rm for} \quad 0 \leq x \leq 2 \pi.
\end{equation}
It follows that, at the rational times $t^\ast=\pi p/q$, the second summation satisfies
\begin{eqnarray*}\begin{aligned}
\mathrm{II}(t^\ast, x)&=\sum_{k}\i^N\,\widehat{g}(k)\sin(k^N t^\ast)\partial_x^{-N}e^{\i kx}+\sum_{k\neq 0} \widehat{g}(k)\sin(k^N t^\ast) \sum_{j=0}^{N-1}\frac{\i^j}{j! \>k^{N-j}}x^j\\
&:=\mathrm{II}^{(1)}(x)+\mathrm{II}^{(2)}(x).
\end{aligned}\end{eqnarray*}
Since $g(x)$ is of bounded variation, the series $C_j$ given in \eqref{Cj} is convergent for each $j=0, \ldots, N-1$, then the second component  $\mathrm{II}^{(2)}(x)$ readily leads to the last term in \eqref{sol-ra-ge-eq}.
On the other hand, in the case of $P(k)=k^N$, applying equation \eqref{eq2-l2} to the delta function $\delta(x)$  yields
\begin{equation*}\frac{1}{2\pi}\sum_{k=-\infty}^{\infty}\sin\left(k^N t^\ast\right) e^{\i kx}=\sum_{j=0}^{2q-1}b_j\left(\frac{p}{q}\right)\delta\left(x-\frac{\pi j}{q}\right),\end{equation*}
for some constants $b_j\in \mathbb{C},\; j=0, \ldots, 2q-1.$
We thus deduce $\mathrm{II}^{(1)}(x)$ as follows:
\begin{eqnarray*}\begin{aligned}
\mathrm{II}^{(1)}(x)&=\frac{\i^N}{2\pi}\sum_{k}\sin(k^N t^\ast)\int_0^{2\pi}g(y)e^{-\i ky}\partial_x^{-N}e^{\i kx}\,\mathrm{d}y\\
&=\frac{\i^N}{2\pi}\sum_{k}\sin(k^N t^\ast)\int_0^{2\pi}e^{\i ky}\partial_x^{-N}g(x-y)\,\mathrm{d}y\\
&=\i^N\int_0^{2\pi}G(x-y)\sum_{j=0}^{2q-1}b_j\delta\left(y-\frac{\pi j}{q}\right)\,\mathrm{d}y =\i^N\sum_{j=0}^{2q-1}b_jG\left(x-\frac{\pi j}{q}\right).
\end{aligned}\end{eqnarray*}
Summing  $\mathrm{II}^{(1)}(x)$, $\mathrm{II}^{(2)}(x)$ and $\mathrm{I}(t^\ast, x)$ gives \eqref{sol-ra-ge-eq}, which justifies the statement of the theorem.
\end{proof}

In particular, if the initial data $f(x)$ and $g(x)$ are the step function $\sigma(x)$ given in \eqref{iv-s},  the following corollary holds.

\begin{cor}\label{sol-ge-eqN-ivs}
Let $\sigma^{j, q}(x)$ be the box function defined in \eqref{box}. At a rational time $t^\ast=\pi p/q$, the solution to the periodic initial-boundary value problem \eqref{ge-eq}-\eqref{ge-ic} on the interval $0\leq x\leq 2\pi$, with initial data $f(x)=g(x)=\sigma(x)$ given in \eqref{iv-s} takes the form
\begin{equation}\label{sol-ra-geeq-sic}
u(t^\ast, x)=\sum_{j=0}^{2q-1}a_j\left(\frac{p}{q}\right)\sigma^{j, q}(x)+(-1)^{\left[\frac{N}{2}\right]}\partial_x^{-N}\sum_{j=0}^{2q-1}b_j\left(\frac{p}{q}\right)\sigma^{j, q}(x)+\sum_{j=0}^{\left[\frac{N}{2}\right]-1}D_j x^{2j+1},
\end{equation}
where
\begin{equation}\label{Dj}
D_j=\frac{(-1)^{j+1}4}{\pi(2j+1)!}\sum_{n=0}^{+\infty}\frac{\sin((2n+1)^Nt^\ast)}{(2n+1)^{N-2j}}, \qquad j=0, \ldots, \left[\frac{N}{2}\right]-1,
\end{equation}
and the coefficients $a_j, j=0,\ldots, 2q-1$ are determined by formula \eqref{eq1-l1} in Lemma 2.1,  $b_j, j=0,\ldots, 2q-1$ satisfy \eqref{eq2-l1} for even $N$, and \eqref{eq3-l1} for odd $N$, respectively.
\end{cor}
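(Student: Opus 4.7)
The plan is to extend the argument used in Section~2 for the linear beam equation to the general monomial case $\omega(k)=\pm k^N$. The starting point is the Fourier representation~\eqref{sol-ge-eq} of the solution, evaluated at $f=g=\sigma$. Since $\widehat{\sigma}(k)=2\i/(\pi k)$ for $k$ odd and vanishes otherwise, the same pairing of modes $\pm(2n+1)$ that produced~\eqref{beam-solut-1} now yields the decomposition $u(t^\ast,x)=\mathrm{I}(t^\ast,x)+\mathrm{II}(t^\ast,x)$, where
\begin{equation*}
\mathrm{I}(t^\ast,x)=-\frac{4}{\pi}\sum_{n=0}^{+\infty}\frac{\cos((2n+1)^N t^\ast)\sin((2n+1)x)}{2n+1},\quad \mathrm{II}(t^\ast,x)=-\frac{4}{\pi}\sum_{n=0}^{+\infty}\frac{\sin((2n+1)^N t^\ast)\sin((2n+1)x)}{(2n+1)^{N+1}}.
\end{equation*}
Lemma~\ref{beam-le1}(i) immediately identifies $\mathrm{I}(t^\ast,x)=\sum_{j=0}^{2q-1}a_j(p/q)\sigma^{j,q}(x)$, which supplies the first summand of~\eqref{sol-ra-geeq-sic}.

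The main task is to recast $\mathrm{II}(t^\ast,x)$ in the claimed form. The extra factor $1/(2n+1)^N$ suggests applying the integration operator $\partial_x^{-N}$ of~\eqref{minus1} termwise to the identity~\eqref{eq2-l1} when $N$ is even, or to~\eqref{eq3-l1} when $N$ is odd, since repeated antidifferentiation of $\sin(kx)$ or $\cos(kx)$ regenerates the sine kernel appearing in $\mathrm{II}$ (up to polynomial corrections). Writing $T_N(kx)=\sin(kx)$ for $N$ even and $T_N(kx)=\cos(kx)$ for $N$ odd, a straightforward induction on $N$ — with the normalization $(\partial_x^{-m}\phi)(0)=0$ forced by~\eqref{minus1} on the fundamental period $[0,2\pi]$ — establishes the uniform expansion
\begin{equation*}
\partial_x^{-N}T_N(kx)=\frac{(-1)^{[N/2]}\sin(kx)}{k^N}+\sum_{j=0}^{[N/2]-1}\frac{(-1)^{[N/2]-j+1}\,x^{2j+1}}{(2j+1)!\,k^{N-2j-1}}.
\end{equation*}
The crucial features are that only odd powers $x^{2j+1}$ with $0\leq j\leq[N/2]-1$ appear in the polynomial remainder, and that the leading sine term carries the prefactor $(-1)^{[N/2]}$ in both parity cases.

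Substituting this expansion into the termwise application of $\partial_x^{-N}$ to~\eqref{eq2-l1} or~\eqref{eq3-l1} — which is legitimate since the resulting series has denominator $(2n+1)^{N+1}$, hence is absolutely convergent for $N\geq 2$ — and solving for $\mathrm{II}(t^\ast,x)$ gives
\begin{equation*}
\mathrm{II}(t^\ast,x)=(-1)^{[N/2]}\partial_x^{-N}\sum_{j=0}^{2q-1}b_j(p/q)\sigma^{j,q}(x)+(-1)^{[N/2]}\frac{4}{\pi}\sum_{n=0}^{+\infty}\frac{\sin((2n+1)^N t^\ast)}{2n+1}\,P_{2n+1}(x),
\end{equation*}
where $P_k(x)$ is the polynomial remainder from the expansion above. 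Since $(-1)^{[N/2]}\cdot(-1)^{[N/2]-j+1}=(-1)^{j+1}$ for both parities of $N$, and the factor $1/(2n+1)\cdot 1/(2n+1)^{N-2j-1}=1/(2n+1)^{N-2j}$, the polynomial correction collapses precisely to $\sum_{j=0}^{[N/2]-1}D_j\,x^{2j+1}$ with $D_j$ exactly as in~\eqref{Dj}. Combining with $\mathrm{I}(t^\ast,x)$ then yields~\eqref{sol-ra-geeq-sic}.

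The main technical obstacle will be the uniform sign and parity book-keeping that keeps the $N$ even and $N$ odd branches aligned under a single prefactor $(-1)^{[N/2]}$ and a single form of polynomial remainder, so that the compact formula~\eqref{sol-ra-geeq-sic} indeed holds as a unified statement. Once the inductive expansion of $\partial_x^{-N}T_N(kx)$ has been verified, the remainder of the argument is algebraic and proceeds in parallel with the explicit $N=2$ computation already carried out in Section~2, which serves as the template for the general case.
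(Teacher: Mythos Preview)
Your proposal is correct and follows essentially the same route as the paper's proof. The paper likewise reduces $u(t^\ast,x)$ to the two sums $\mathrm{I}$ and $\mathrm{II}$, handles $\mathrm{I}$ via Lemma~\ref{beam-le1}(i), and treats $\mathrm{II}$ by establishing (via induction, separately for $N$ even and $N$ odd) the identity
\[
\frac{\sin((2n+1)x)}{(2n+1)^{N+1}}=(-1)^{[N/2]}\,\partial_x^{-N}\frac{T_N((2n+1)x)}{2n+1}+\sum_{j=0}^{[N/2]-1}\frac{(-1)^{j}}{(2j+1)!(2n+1)^{N-2j}}\,x^{2j+1},
\]
which is just your expansion of $\partial_x^{-N}T_N(kx)$ solved for the sine term; your unified $T_N$ and $[N/2]$ notation neatly packages what the paper writes as two parallel cases.
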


\begin{proof}
If $f(x)=g(x)=\sigma(x)$, the corresponding solution \eqref{sol-ge-eq} reduces to
  \begin{eqnarray*}\begin{aligned}\label{sol-beam}
u(t^\ast, x)&=-\frac{4}{\pi}\left[\>\sum_{n=0}^{+\infty}\frac{\cos((2n+1)^N t^\ast)\sin((2n+1) x)}{2n+1}+\sum_{n=0}^{+\infty}\frac{\sin((2n+1)^N t^\ast)\sin((2 n+1)x)}{(2n+1)^{N+1}}\>\right].
\end{aligned}\end{eqnarray*}
Obviously,  the first summation is exactly the first term in \eqref{sol-ra-geeq-sic}. As for the second summation,  a direct induction procedure shows that, if $N$ is even,
\begin{equation*}
\frac{\sin((2 n+1)x)}{(2n+1)^{N+1}}=(-1)^{\frac{N}{2}}\partial_x^{-N}\frac{\sin((2 n+1)x)}{2n+1}+\sum_{j=0}^{\frac{N}{2}-1}\frac{(-1)^{j}}{(2j+1)!(2n+1)^{N-2j}}x^{2j+1},
\end{equation*}
whereas, if $N$ is odd,
\begin{equation*}
\frac{\sin((2 n+1)x)}{(2n+1)^{N+1}}=(-1)^{\frac{N-1}{2}}\partial_x^{-N}\frac{\cos((2 n+1)x)}{2n+1}+\sum_{j=0}^{\frac{N-1}{2}-1}\frac{(-1)^{j}}{(2j+1)!(2n+1)^{N-2j}}x^{2j+1}.
\end{equation*}
Substituting into the second summation, and making use of formulae \eqref{eq2-l1} and \eqref{eq3-l1} for even and odd $N$, respectively, verifies \eqref{sol-ra-geeq-sic}, proving the corollary.
\end{proof}

More specifically, if the underlying equation is exactly the linear beam equation in \eqref{ibv-beam} with dispersion relation $\omega(k)=\pm k^2$, it follows that the second term in \eqref{sol-ra-geeq-sic} reduces to \eqref{II(1)-t1}, which is nothing but $-H(x)$ in \eqref{sol-ra-beam}. Meanwhile, the third term is identical to  $C(t^\ast)x$ in \eqref{sol-ra-beam}. This indicates that in this particular case, Corollary \ref{sol-ge-eqN-ivs} is in accordance with Theorem \ref{thm-beam}.

As in Section 2, let us now illustrate how, by Corollary~\ref{sol-ge-eqN-ivs}, we can calculate the value of the Riemann zeta function at $s=4$. We define
\begin{equation*}
H_{p, q}^N(x)=\partial_x^{-N}\sum_{j=0}^{2q-1}b_j\left(\frac{p}{q}\right)\sigma^{j, q}(x),
\end{equation*}
where $b_j$ are determined by the formulae \eqref{eq2-l1} for even $N$, or \eqref{eq3-l1} for odd $N$, respectively.
Denote
\begin{equation}\label{series}
S_{l}^N(t)=\sum_{n=0}^{+\infty}\frac{\sin((2n+1)^N t)}{(2n+1)^l},\quad\quad  \mathrm{for} \quad l\in\mathbb{Z^+}, \quad\mathrm{with}\quad  l\geq 2,
\end{equation}
and let
\begin{equation}\label{gamma}
\Gamma_N
=
\begin{cases}
\displaystyle\sum\limits_{k=1}^{\frac{N}{2}}\frac{(-1)^{k}(2\pi)^{N-2k+1}}{(N-2k+1)!}S_{2k}^N(t^\ast), \quad \quad & \mathrm{if} \;N\; \mathrm{even},\\
\displaystyle\sum\limits_{k=1}^{\frac{N-1}{2}}\frac{(-1)^{k}(2\pi)^{N-2k}}{(N-2k)!}S_{2k+1}^N(t^\ast), \quad \quad & \mathrm{if} \;N\geq 3\; \mathrm{odd}.
\end{cases}
\end{equation}
According to \eqref{sol-ra-geeq-sic}, we find a formula involving the sum  $\Gamma_N$, which along with the periodicity produces
\begin{equation}\label{SN}
\Gamma_N=\frac{\pi}{4}H_{p, q}^N(2\pi).
\end{equation}
Note that, if $N$ is even, at the special rational times $t^\ast_l=(2l-1)\pi/2,\,l\in \mathbb{Z^+}$,
\begin{equation*}S_N^N\left(t^\ast_l\right)=(-1)^{l-1}\sum_{n=0}^{+\infty}\frac{1}{(2n+1)^N},\end{equation*}
while, if $N$ is odd, $S_N^N\left(t^\ast_l\right)$ is a alternating series, namely,
\begin{equation*}S_N^N\left(t^\ast_l\right)=(-1)^{l-1}\sum_{n=0}^{+\infty}\frac{(-1)^n}{(2n+1)^N}.\end{equation*}
 Hereafter, we denote
 \begin{equation*}\sigma (N)=\sum_{n=0}^{+\infty}\frac{1}{(2n+1)^{N}},\quad \mathrm{for}\; \mathrm{even} \;N,\qquad \tau(N)=\sum_{n=0}^{+\infty}\frac{(-1)^n}{(2n+1)^{N}},\quad \mathrm{for}\; \mathrm{odd} \;N,\end{equation*}
 respectively.
Therefore, in the special rational times $t^\ast_l$ setting, \eqref{SN} establishes the recursion formulae for  $\sigma(2k)$ and  $\tau(2k+1)$ for each $k\in \mathbb{Z^+}$. More precisely,  for even $N$
\begin{equation*}\sigma(N)=\frac{(-1)^{\frac{N}{2}}}{8}\left(H_{1, 2}^N(2\pi)-\frac{4}{\pi}\sum_{k=0}^{\frac{N}{2}-1}\frac{(-1)^{k}(2\pi)^{N-2k+1}}{(N-2k+1)!}\sigma(2k)\right), \end{equation*}
or, for odd $N$,
\begin{equation*}\tau(N)=\frac{(-1)^{\frac{N-1}{2}}}{8}\left(H_{1, 2}^N(2\pi)-\frac{4}{\pi}\sum_{k=0}^{\frac{N-1}{2}-1}\frac{(-1)^{k}(2\pi)^{N-2k}}{(N-2k)!}\tau(2k+1)\right), \end{equation*}
 which are initiated by the series $\sigma(2)$ \eqref{C-pi2} for even $N$, or $\tau(3)$ for odd $N$, respectively.
As far as $\tau(3)$ is concerned, one can verify from \eqref{eq3-l1} for $N=3$ that
\begin{equation*}
\tilde{b}_0=-1,\quad \tilde{b}_1=\tilde{b}_2=1,\quad \tilde{b}_3=-1,\end{equation*}
which immediately yields
\begin{eqnarray*}\begin{aligned}\label{sol-pi3}
H_{1, 2}^3(x)
=
\begin{cases}
-\frac{1}{6} x^3, \quad \quad &0\leq x\leq \frac{\pi}{2},\\
\frac{1}{6} \left(x^3-3\pi x^2+\frac{3\pi^2}{2}x-\frac{\pi^3}{4}\right), \quad \quad &\frac{\pi}{2}\leq x\leq \frac{3\pi}{2},\\
-\frac{1}{6} \left(x^3-6\pi x^2+12\pi^2x-\frac{13\pi^3}{2}\right), \quad \quad &\frac{3\pi}{2}\leq x\leq 2\pi.
\end{cases}
\end{aligned}\end{eqnarray*}
We thus arrive at
\begin{equation*}
\tau(3)=\sum_{n=0}^{+\infty}\frac{(-1)^n}{(2n+1)^3}=\frac{\pi^3}{32}.
\end{equation*}
When it comes to $\zeta(4)$, we calculate from \eqref{b-2-pi2} that
\begin{eqnarray*}\begin{aligned}\label{sol-zeta4}
H_{1, 2}^4(x)
=
\begin{cases}
-\frac{1}{24} x^4, \quad \quad &0\leq x\leq \pi,\\
\frac{1}{24} \left(x^4-8\pi x^3+12\pi^2x^2-8\pi^3x+2\pi^4\right), \quad \quad &\pi \leq x\leq 2\pi.
\end{cases}
\end{aligned}\end{eqnarray*}
Consequently,
\begin{equation*}
\sigma(4)=\sum_{n=0}^{+\infty}\frac{1}{(2n+1)^4}=\frac{1}{8}\left(H_{1, 2}^4(2\pi)+\frac{4\pi^2}{3!}\bar{\zeta}(2)\right)=\frac{\pi^4}{96},\end{equation*}
which further yields the following classical result for the Riemann zeta function at $s=4$:
\begin{equation*}
\zeta(4)=\sum_{n=0}^{+\infty}\frac{1}{n^4}=\frac{\pi^4}{90}.\end{equation*}

\subsection{Monomial dispersion relation --- second approach}

We now briefly consider a different approach in the monomial case, which is based on \cite[Chapter 7]{Fa} and derive an alternative representation of the solution at rational times. Hence, the dispersion relation assumes the form \eqref{monomial}.
Moreover, only for this subsection, we relax the condition on $g$ and allow it to have non-zero mean over $[0,2\pi]$.

The solution to the periodic initial-boundary value problem \eqref{ge-eq}-\eqref{ge-ic} is given by
\begin{eqnarray}
	\begin{aligned}\label{sol-ge-eq-N1}
			u(t, x)&=\sum_{k}\widehat{f}(k)\cos(k^N t)e^{\i kx}+\frac{1}{2\pi}\int_{0}^{2\pi}g(y)\mathrm{d}y \ t + \sum_{k\neq 0}\frac{\widehat{g}(k)\sin(k^N t)}{k^N}e^{\i kx}\\
			&:=\mathrm{I}(t,x)+ \langle g \rangle \ t + \mathrm{II}(t,x),
	\end{aligned}
\end{eqnarray}
where $\langle g \rangle $ is the mean of $g$ and $\mathrm{I}(t,x)$, $\mathrm{II}(t,x)$ correspond to the two Fourier series representations respectively. In the following, we derive an alternative representation of the term $\mathrm{II}(t, x)$. In particular, we will show that $\mathrm{II}(t,x)$ can be expressed as the time-evolution of the periodic convolution of the function $g - \langle g \rangle$ with a polynomial of degree $N\geq 2$. As it is known, see for example \cite[Proposition 2.76]{Iorio}, the convolution gains the regularity of the most regular function between the two involved. Consequently, we may deduce that at any time $t>0$, either rational or irrational, the Fourier series representation of $\mathrm{II}(t,x)$ defines a $2\pi$-periodic function of class $C^{N}(\mathbb{R})$.

Let us make all the above precise and define first the following family of polynomials on $[0,2\pi]$.

\begin{definition}
	\label{QN-Pol-Def}
	Let $N\geq 1$ be an integer. We denote by $Q_{N}:[0,2\pi]\rightarrow \mathbb{C}$ the polynomial of degree $N$, defined inductively by the formula
	\begin{equation}
		\label{QN-Poly}
		Q_{N}(x) = \frac{(-\i)^{N} x^{N}}{(-1)^{N-1} N!} - \sum_{\ell=1}^{N-1} \frac{(-1)^{\ell - N}}{(-\i)^{\ell - N}} \frac{(2\pi)^{N-\ell}}{(N-\ell +1)!} Q_{\ell}(x).
	\end{equation}
\end{definition}

The crucial feature of the polynomial $Q_{N}$ is the form of its Fourier coefficients, which are equal to $k^{-N}$. As we shall shortly see, this  will allow us to invoke the operation of the periodic convolution in the representation of $\mathrm{II}(t,x)$.

\begin{lemma}
	\label{QN-Poly-FC}
	Fix an integer $N\geq 1$ and consider the polynomial $Q_{N}:[0,2\pi]\rightarrow \mathbb{C}$. Then, for $k\not=0$, $\widehat{Q_{N}}(k) = k^{-N}$.
\end{lemma}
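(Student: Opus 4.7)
The plan is to argue by strong induction on $N\geq 1$. The base case $N=1$ gives $Q_{1}(x) = -\i x$, and a single integration by parts on $[0,2\pi]$ yields $\widehat{Q_{1}}(k) = \frac{1}{2\pi}\int_{0}^{2\pi}(-\i x)e^{-\i kx}\,\mathrm{d}x = 1/k$ for every $k\neq 0$.

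For the inductive step, assuming $\widehat{Q_{\ell}}(k) = k^{-\ell}$ for all $1\leq \ell\leq N-1$, I would take Fourier coefficients on both sides of the recursion \eqref{QN-Poly}, producing
\begin{equation*}
\widehat{Q_{N}}(k) = \frac{(-\i)^{N}}{(-1)^{N-1}\,N!}\,\widehat{x^{N}}(k) \;-\; \sum_{\ell=1}^{N-1} \frac{(-1)^{\ell-N}}{(-\i)^{\ell-N}}\,\frac{(2\pi)^{N-\ell}}{(N-\ell+1)!}\,\frac{1}{k^{\ell}}.
\end{equation*}
The key computation is the explicit expansion
\begin{equation*}
\widehat{x^{N}}(k) \;=\; -\sum_{m=1}^{N} \frac{N!}{(N-m+1)!}\,\frac{(2\pi)^{N-m}}{(\i k)^{m}}, \qquad k\neq 0,
\end{equation*}
which I would derive by $N$-fold iterated integration by parts on $[0,2\pi]$: at each stage only the boundary term at $x=2\pi$ contributes, and the resulting recursion $I_{N}(k) = -(2\pi)^{N}/(\i k) + (N/\i k)\,I_{N-1}(k)$ unwinds to the displayed formula.

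Substituting this expansion, and using the elementary identities $(-\i)^{N}=(-1)^{N}\i^{N}$ and $(-\i)^{\ell-N}=(-1)^{\ell-N}\i^{\ell-N}$, the $m=N$ contribution simplifies to exactly $1/k^{N}$, while for each $1\leq m=\ell\leq N-1$ the corresponding summand reduces to $\i^{N-\ell}(2\pi)^{N-\ell}/((N-\ell+1)!\,k^{\ell})$, which is precisely the term subtracted in the second sum. All the lower-order pieces therefore cancel, leaving $\widehat{Q_{N}}(k) = k^{-N}$, as claimed; indeed the coefficients in \eqref{QN-Poly} are tailored precisely to enforce this cancellation.

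The only real obstacle is the careful bookkeeping of the signs and powers of $\i$; once the identity $(-1)^{\ell-N}/(-\i)^{\ell-N} = \i^{N-\ell}$ is isolated, the term-by-term match between the two sums is immediate and the induction closes.
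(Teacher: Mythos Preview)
Your proposal is correct and follows essentially the same approach as the paper: strong induction on $N$, taking Fourier coefficients of the recursion \eqref{QN-Poly}, computing the Fourier coefficient of the monomial $x^{N}$ via iterated integration by parts, and observing that the lower-order terms cancel exactly against the subtracted sum. The only cosmetic difference is that the paper runs the induction from $N$ to $N+1$ and records the integration-by-parts expansion for $\int_{0}^{2\pi} y^{N+1}e^{-\i k y}\,dy$ directly, whereas you work at level $N$ and phrase the same identity as a recursion $I_{N}(k)=-(2\pi)^{N}/(\i k)+(N/\i k)I_{N-1}(k)$; the algebra and the cancellation are identical.
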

\begin{proof}
	The proof follows by induction on $N$. It is easy to show that the statement holds for $N=1$ and $N=2$. We assume that $\widehat{Q_{\ell}}(k) = k^{-\ell}$ for $\ell=1,2,\dots, N$, with $N\geq3$, and calculate the Fourier coefficients of $Q_{N+1}$.
	
	Let $k\not=0$. Then, we have that
	\begin{equation*}
		\begin{aligned}
			\widehat{Q_{N+1}}(k) &= \frac{1}{2\pi}\int_{0}^{2\pi} Q_{N+1}(y) e^{-\i k y}dy \\
			&= \frac{(-i)^{N+1}}{2\pi (-1)^{N} (N+1)!}\int_{0}^{2\pi} y^{N+1}e^{-\i k y}dy - \sum_{\ell=1}^{N} \frac{(-1)^{\ell - N-1}}{(-\i)^{\ell - N-1}} \frac{(2\pi)^{N+1-\ell}}{(N-\ell)!} \frac{1}{k^{\ell}}.
		\end{aligned}
	\end{equation*}
	However, a direct calculation shows that
	\begin{equation}
		\int_{0}^{2\pi} y^{N+1}e^{-\i k y}dy = \frac{2 \pi (-1)^{N} (N+1)!}{(-\i)^{N+1} k^{N+1}} + (N+1)! \sum_{\ell = 1}^{N} \frac{(-1)^{\ell -1}}{(-\i)^{\ell}} \frac{(2\pi)^{N + 2-\ell}}{(N-\ell)!} \frac{1}{k^{\ell}}.
	\end{equation}
	Substituting back for $\widehat{Q_{N+1}}(k)$ we find that
	\begin{equation*}
		\begin{aligned}
			\widehat{Q_{N+1}}(k) = \frac{1}{k^{N+1}},
		\end{aligned}
	\end{equation*}
	which concludes the proof.
\end{proof}

We now turn our attention to the second ingredient needed for the alternative representation of $\mathrm{II}(t,x)$. Thus, we recall the definition of the periodic convolution, see \cite{Stein2011}.

\begin{definition}
	\label{Periodic Conv Def}
	Let $f$ and $g$ be $2\pi$-periodic on $\mathbb{R}$ and such that $f$, $g$ $\in L^{1}(0,2\pi)$. Then the \emph{$2\pi$-periodic convolution} of $f$ and $g$ is defined by
	\begin{equation}
		\label{Periodic Conv}
		f\ast g (x) = \frac{1}{2\pi}\int_{0}^{2\pi} f(x - y) g(y) dy, \quad x\in[0,2\pi].
	\end{equation}
\end{definition}

From \cite[Proposition 3.1]{Stein2011}, we know that $f\ast g$ defines a $2\pi$-periodic continuous function whose Fourier coefficients are given by $\widehat{f \ast g}(k) = \widehat{f}(k)  \widehat{g}(k)$. Summarizing the above, we arrive at the following lemma which identifies $\mathrm{II}(t,x)$ based on the convolution of $g - \langle g \rangle $ with $Q_{N}$.

\begin{lemma}
	\label{II representation lemma}
	Assume that $g$ is of bounded variation over $[0,2\pi]$. Fix integer $N\geq 2$ and consider the function
	\begin{equation}
		\label{v function}
		v(x) = (g-\langle g \rangle)\ast Q_{N} (x), \quad x\in [0,2\pi].
	\end{equation}
	Then, at any fixed time $t\geq 0$, we have that
	\begin{equation}
		\label{II representation}
		\mathrm{II}(t,x) = \sum_{k=-\infty}^{\infty} \widehat{v}(k)\sin(k^{N} t) e^{\i kx}.
	\end{equation}
\end{lemma}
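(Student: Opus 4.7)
The plan is to prove the lemma directly via the convolution theorem for Fourier series, reducing the identity to a coefficient-by-coefficient comparison that invokes Lemma \ref{QN-Poly-FC}. The key observation is that, since $g$ is of bounded variation, its Fourier coefficients satisfy $|\widehat{g}(k)|\leq C/|k|$, so all the Fourier series under consideration will converge absolutely and uniformly (for $N\geq 2$), justifying every manipulation below.

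First, I would compute $\widehat{v}(k)$ explicitly. By \cite[Proposition 3.1]{Stein2011} applied to the $2\pi$-periodic convolution in Definition \ref{Periodic Conv Def},
\begin{equation*}
\widehat{v}(k) \;=\; \widehat{(g-\langle g\rangle)}(k)\cdot\widehat{Q_{N}}(k).
\end{equation*}
Since subtracting the mean only removes the zeroth Fourier mode, one has $\widehat{(g-\langle g\rangle)}(0)=0$ and $\widehat{(g-\langle g\rangle)}(k)=\widehat{g}(k)$ for $k\neq 0$. Combined with Lemma \ref{QN-Poly-FC}, which gives $\widehat{Q_{N}}(k)=k^{-N}$ for $k\neq 0$, this yields
\begin{equation*}
\widehat{v}(0)=0, \qquad \widehat{v}(k)=\frac{\widehat{g}(k)}{k^{N}} \quad (k\neq 0).
\end{equation*}

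Next, I would substitute these coefficients into the right-hand side of \eqref{II representation}. The $k=0$ contribution vanishes automatically because $\sin(0\cdot t)=0$, while the remaining sum gives precisely
\begin{equation*}
\sum_{k=-\infty}^{\infty}\widehat{v}(k)\sin(k^{N}t)e^{\i kx} \;=\; \sum_{k\neq 0}\frac{\widehat{g}(k)\sin(k^{N}t)}{k^{N}}\,e^{\i kx} \;=\; \mathrm{II}(t,x),
\end{equation*}
which matches the definition extracted from \eqref{sol-ge-eq-N1}.

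The only thing requiring a brief justification is the convergence/legitimacy of the convolution-theorem step. Because $g$ is of bounded variation, $|\widehat{g}(k)|\lesssim 1/|k|$, and so $|\widehat{v}(k)|\lesssim 1/|k|^{N+1}$ with $N\geq 2$; hence the Fourier series of $v$ converges absolutely and uniformly, and $v$ is actually continuous (indeed $C^{N-1}$), so the identification of coefficients holds pointwise. This is the mildest obstacle — no hard analysis is required — and the lemma follows. The real content of the construction lies in Definition \ref{QN-Pol-Def} and Lemma \ref{QN-Poly-FC}, which were engineered precisely so that convolution with $Q_{N}$ implements the Fourier multiplier $k\mapsto k^{-N}$ on mean-zero functions.
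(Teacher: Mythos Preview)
Your proof is correct and follows essentially the same approach as the paper: compute $\widehat{v}(k)$ via the convolution theorem and Lemma~\ref{QN-Poly-FC}, then match coefficients with the definition of $\mathrm{II}(t,x)$. The only difference is that you add an explicit (and helpful) remark on absolute convergence, which the paper omits.
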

\begin{proof}
	Let $\tilde{g} = g - \langle g \rangle$. Then, the Fourier coefficients of $\tilde{g}$ are given by
	\begin{equation*}
		\widehat{\tilde{g}}(k) = \widehat{g}(k), \quad k\not=0, \quad \widehat{\tilde{g}}(0) = 0.
	\end{equation*}
	Moreover, from Lemma~\ref{QN-Poly-FC}, we know that for $k\not=0$, $\widehat{Q_{N}}(k) = k^{-N}$. Hence,
	\begin{equation*}
		\widehat{v}(k) = \widehat{\tilde{g}}(k)\, \widehat{Q_{N}}(k) = \begin{cases}
			0, \quad k = 0, \\
			\frac{\widehat{g}(k)}{k^{N}}, \quad k\not=0,
		\end{cases}
	\end{equation*}
	which implies that for any $t>0$,
	\begin{equation*}
		\sum_{k=-\infty}^{\infty} \widehat{v}(k)\sin(k^{N} t) e^{\i kx} = \sum_{k\not=0} \frac{\widehat{g}(k)}{k^{N}}\sin(k^{N} t) e^{\i kx} = \mathrm{II}(t,x).
	\end{equation*}
\end{proof}

The validity of the revival effect at rational times $t^\ast=\pi p/q$ follows again by Lemma~\ref{Revival Representations} applied directly on $\mathrm{I}(t^\ast,x)$ and $\mathrm{II}(t^\ast,x)$ in conjunction with Lemma~\ref{II representation lemma}. This is the context of the next theorem.

\begin{theorem}\label{thm-ge-eq 2}
Suppose that equation \eqref{ge-eq} admits the monomial dispersion relation \eqref{monomial}, the initial data $f(x)$ and $g(x)$  in \eqref{ge-ic} are of bounded variation. Then, at each rational time $t^\ast=\pi p/q$, the solutions to the periodic initial-boundary value problem  \eqref{ge-eq}-\eqref{ge-ic} take the form
	\begin{equation}\label{sol-ra-ge-eq 2}
		u(t^\ast, x)=\sum_{j=0}^{2q-1}a_j\left(\frac{p}{q}\right)\,f\left(x-\frac{\pi j}{q}\right)+ \langle g \rangle \,t^{\ast} + \sum_{j=0}^{2q-1}d_j\left(\frac{p}{q}\right)\,v\left(x-\frac{\pi j}{q}\right),
	\end{equation}
	where $v(x) = ( g - \langle g \rangle ) \ast Q_{N} (x)$ and the coefficients $a_j, \,d_j\in \mathbb{C}$, $j=0,\ldots, 2q-1$, are certain constants depending on $p,q$.
\end{theorem}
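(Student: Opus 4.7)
The plan is to start from the Fourier series representation displayed in \eqref{sol-ge-eq-N1}, namely
$$u(t^\ast,x) \;=\; \mathrm{I}(t^\ast,x) \;+\; \langle g\rangle\, t^\ast \;+\; \mathrm{II}(t^\ast,x),$$
and to apply the revival identities of Lemma~\ref{Revival Representations} separately to $\mathrm{I}(t^\ast,x)$ and to $\mathrm{II}(t^\ast,x)$, taking the integral polynomial $P(k)=k^N$. The linear-in-$t$ drift $\langle g\rangle t^\ast$ is already displayed in the required form and needs no further treatment.

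First, since $f$ is of bounded variation, the cosine identity \eqref{eq1-l2} of Lemma~\ref{Revival Representations} applied with $P(k)=k^N$ produces constants $a_j(p/q)\in\mathbb{C}$ such that
$$\mathrm{I}(t^\ast,x)\;=\;\sum_{k}\widehat{f}(k)\cos\!\bigl(k^N t^\ast\bigr)\,e^{\i kx}\;=\;\sum_{j=0}^{2q-1}a_j\!\left(\frac{p}{q}\right)f\!\left(x-\frac{\pi j}{q}\right),$$
which reproduces the first summand in \eqref{sol-ra-ge-eq 2}.

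Next, I would invoke Lemma~\ref{II representation lemma} to rewrite
$$\mathrm{II}(t^\ast,x)\;=\;\sum_{k}\widehat{v}(k)\sin\!\bigl(k^N t^\ast\bigr)\,e^{\i kx},\qquad v\;=\;(g-\langle g\rangle)\ast Q_N,$$
and then apply the sine identity \eqref{eq2-l2} of Lemma~\ref{Revival Representations} to $v$, yielding constants $d_j(p/q)\in\mathbb{C}$ with
$$\mathrm{II}(t^\ast,x)\;=\;\sum_{j=0}^{2q-1}d_j\!\left(\frac{p}{q}\right)v\!\left(x-\frac{\pi j}{q}\right).$$
Summing the three pieces then gives \eqref{sol-ra-ge-eq 2}.

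The one point requiring verification, and effectively the only obstacle, is that Lemma~\ref{Revival Representations} is applicable to the auxiliary function $v$; this requires $v$ to be of bounded variation on $[0,2\pi]$. Since $g$ is of bounded variation we have the standard estimate $|\widehat{g}(k)|=O(1/|k|)$, and by Lemma~\ref{QN-Poly-FC} the nonzero Fourier coefficients of $Q_N$ equal $k^{-N}$; hence $|\widehat{v}(k)|=O(1/|k|^{N+1})$ with $N\geq 2$. Consequently $v\in C^{N-1}(\mathbb{R})$ and is, in particular, of bounded variation, which legitimizes the second application of the revival identity and completes the proof.
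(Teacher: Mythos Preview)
Your proposal is correct and follows essentially the same route as the paper: decompose $u(t^\ast,x)=\mathrm{I}+\langle g\rangle t^\ast+\mathrm{II}$ via \eqref{sol-ge-eq-N1}, apply \eqref{eq1-l2} to $\mathrm{I}$, rewrite $\mathrm{II}$ through Lemma~\ref{II representation lemma}, and then apply \eqref{eq2-l2} to $v$. Your explicit check that $v$ is of bounded variation (needed to invoke Lemma~\ref{Revival Representations}) is a welcome addition that the paper leaves implicit, relying instead on the remark preceding Definition~\ref{QN-Pol-Def} that $v\in C^{N}(\mathbb{R})$.
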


As a consequence of Theorem~\ref{thm-ge-eq 2}, equivalently of Theorem~\ref{thm-ge-eq}, the solution at rational times is, at least, piecewise continuous, given that $f(x)$ has finitely many jump discontinuities. More specifically, the first term in \eqref{sol-ra-ge-eq 2}, $\mathrm{I}(t^\ast,x)$, corresponds to the revival of the initial function $f(x)$, whereas the third term, $\mathrm{II}(t^\ast)$, is the revival of the $2\pi$-periodic, $C^{N}(\mathbb{R})$ function $v(x) = ( g - \langle g \rangle ) \ast Q_{N} (x)$ and thus, together with the constant term, $\langle g \rangle t^{\ast}$, a $2\pi$-periodic, $C^{N}(\mathbb{R})$ function. Therefore, the solution is given as the sum of the revival of the initial condition $f(x)$ and a more regular function, which ensures the revival of the initial jump discontinuities of $f(x)$.

\subsection{Integral polynomial dispersion relation}
This subsection is concerned with the case that $\sqrt{\phi(k)}$ is an integral polynomial $P(k)$. The corresponding solution takes the form
\begin{eqnarray}\begin{aligned}\label{sol-ge-eq-Pk}
u(t, x)&=\sum_{k}\widehat{f}(k)\cos(P(k) t)e^{\i kx}+\sum_{k\neq 0}\frac{\widehat{g}(k)\sin(P(k) t)}{P(k)}e^{\i kx}:=\mathrm{I}(t, x)+\mathrm{II}(t, x).
\end{aligned}\end{eqnarray}
Firstly, using \eqref{eq1-l2} again, we obtain that  at each rational time $t^\ast=\pi p/q$, the first term in \eqref{sol-ge-eq-Pk} satisfies
\begin{equation*}
\mathrm{I}(t^\ast, x)=\sum_{j=0}^{2q-1}a_j\left(\frac{p}{q}\right)\,f\left(x-\frac{\pi j}{q}\right).
\end{equation*}
Next, notice that
\begin{eqnarray}\begin{aligned}
\left| \>\mathrm{II}(t, x)-\sum_{k\neq 0}\frac{\widehat{g}(k)\sin(P(k) t)}{c_Nk^N}e^{\i kx}\>\right|&\leq \sum_{k\neq 0}\frac{\vert c_{N-1}k^{N-1}+\cdots+c_{1}k+c_0 \vert }{\vert c_N P(k)k^N \vert}\vert \widehat{g}(k)\vert\\
&\lesssim \sum_{k\neq 0}\frac{1}{k^{N+2}},
\end{aligned}\end{eqnarray}
where the fact that $g(x)$ is of bounded variation has been used in the last inequality and $c_{N}$ denotes the coefficient of the highest power of $P(k)$. Since $N\geq 2$, the final series is absolutely convergent, whose sum is a constant. Thus, the above estimate implies that the qualitative behavior of the second term relies crucially on that of the series
\begin{equation}\label{se-II}
\sum_{k\neq 0}\frac{\widehat{g}(k)\sin(P(k) t)}{c_Nk^N}e^{\i kx}.
\end{equation}
While, as for \eqref{se-II}, a direct generalization of the proof of Theorem \ref{thm-ge-eq} shows that, at each rational time $t^\ast=\pi p/q$,
\begin{eqnarray*}\begin{aligned}
\sum_{k\neq 0}\frac{\widehat{g}(k)\sin(P(k) t^\ast)}{c_Nk^N}e^{\i kx}=\i^N\sum_{j=0}^{2q-1}\bar{b}_j\left(\frac{p}{q}\right)\,G\left(x-\frac{\pi j}{q}\right)+\sum_{j=0}^{N-1}\bar{C}_jx^j,
\end{aligned}\end{eqnarray*}
where the coefficients $\bar{b}_0, \ldots, \bar{b}_{2q-1}$ are determined by
\begin{equation*}\sum_{k}\sin\left(P(k) t^\ast\right) e^{\i kx}=\sum_{j=0}^{2q-1}\bar{b}_j\left(\frac{p}{q}\right)\delta\left(x-\frac{\pi j}{q}\right),\quad \hbox{and} \quad
\bar{C}_j=\sum_{k\neq 0}\frac{\widehat{g}(k)\sin(P(k)t^\ast)}{c_Nj! k^{N-j}}.
\end{equation*}
We thus conclude that, at each rational time, the series \eqref{se-II}  admits the same  discontinuities and revival structure as the second summation in solution \eqref{sol-ge-eq-N}.

All in all, we may safely draw the conclusion that, in the present case, the discontinuities of the solution will be determined by the initial data. For instance, if the initial data are the step function $\sigma(x)$, as in \eqref{iv-s}, by Corollary \ref{sol-ge-eqN-ivs}, $u(t, x)$ will be a $C^{N-1}$ curve at each $t_k^0=(2k-1)\pi/2,\;k\in \mathbb{Z}^+$, and exhibit jump discontinuities and revival profile at other rational times.

\subsection{Non-polynomial dispersion relation}
If $\sqrt{\varphi(k)}$ is not a polynomial, we distinguish two cases. The first one assumes that, for large wave numbers, the dispersion relation is asymptotically close to an integral polynomial $P(k)$. Hence, suppose
\begin{equation*}
\sqrt{\varphi(k)} \sim P(k)+O(k^{-1}),\quad \mathrm{as}\quad \vert k\vert\ \rightarrow \infty.
\end{equation*}
Firstly, under the assumption that $f(x)$ is of bounded variation, the first summation in \eqref{sol-ge-eq} satisfies
\begin{eqnarray}\begin{aligned}\label{non-poly-1}
&\left|\ \sum_{k}\widehat{f}(k)\cos\left(\sqrt{\varphi(k)}\> t\right)e^{\i kx}-\sum_{k}\widehat{f}(k)\cos(P(k) t)e^{\i kx}\ \right|\\
&\hskip1.5in\leq \sum_{k} \vert \widehat{f}(k)\vert\,\vert \cos\left(\sqrt{\varphi(k)}\> t\right)-\cos(P(k) t)\vert\lesssim \sum_{k\neq 0}\frac{1}{k^{2}},
\end{aligned}\end{eqnarray}
where the mean-value theorem has been used in the last inequality. Next, for the second summation, one has
\begin{eqnarray*}\begin{aligned}
&\left|\ \sum_{k\neq 0}\frac{\widehat{g}(k)}{\sqrt{\varphi(k)}}\sin\left(\sqrt{\varphi(k)}\> t\right)e^{\i kx}-\sum_{k\neq 0}\frac{\widehat{g}(k)}{k^N}\sin(P(k) t)e^{\i kx}\ \right|\\
&\hskip1in\leq \left|\ \sum_{k\neq 0}\frac{\widehat{g}(k)}{\sqrt{\varphi(k)}}\sin\left(\sqrt{\varphi(k)}\> t\right)e^{\i kx}-\sum_{k\neq 0}\frac{\widehat{g}(k)}{P(k)}\sin\left(\sqrt{\varphi(k)}\> t\right)e^{\i kx}\ \right|\\
&\hskip1.5in{}+\left|\ \sum_{k\neq 0}\frac{\widehat{g}(k)}{P(k)}\sin\left(\sqrt{\varphi(k)}\> t\right)e^{\i kx}-\sum_{k\neq 0}\frac{\widehat{g}(k)}{P(k)}\sin(P(k) t)e^{\i kx}\ \right|\\
&\hskip1.5in{}+\left|\ \sum_{k\neq 0}\frac{\widehat{g}(k)}{P(k)}\sin(P(k) t)e^{\i kx}-\sum_{k\neq 0}\frac{\widehat{g}(k)}{k^N}\sin(P(k) t)e^{\i kx}
\right|\\
&\hskip1in:=\mathrm{II}^{(1)}+\mathrm{II}^{(2)}+\mathrm{II}^{(3)}.
\end{aligned}\end{eqnarray*}
We directly estimate the above three terms as follows:
\begin{eqnarray*}\begin{aligned}
\mathrm{II}^{(1)}&\leq \sum_{k\neq 0}\frac{\vert O(k^{-1})\widehat{g}(k)\vert}{\vert P(k)\varphi(k)\vert}\lesssim \sum_{k\neq 0}\frac{1}{k^{2N+2}} ,
\\
\mathrm{II}^{(2)}&\leq \sum_{k\neq 0}\frac{\vert \sin(\sqrt{\varphi(k)} t)-\sin(P(k) t)\vert \vert \widehat{g}(k)\vert}{\vert P(k)\vert}\lesssim \sum_{k\neq 0}\frac{1}{k^{N+2}} ,
\\
\mathrm{II}^{(3)}&\leq \sum_{k\neq 0}\frac{\vert c_{N-1}k^{N-1}+\ldots+c_0\vert \vert \widehat{g}(k)\vert}{\vert P(k)k^N\vert}\lesssim \sum_{k\neq 0}\frac{1}{k^{N+2}} .
\end{aligned}\end{eqnarray*}
We thus conclude that
\begin{eqnarray*}\begin{aligned}
\left|\; \sum_{k\neq 0}\frac{\widehat{g}(k)}{\sqrt{\varphi(k)}}\sin(\sqrt{\varphi(k)} t)e^{\i kx}-\sum_{k\neq 0}\frac{\widehat{g}(k)}{k^N}\sin(P(k) t)e^{\i kx} \;\right|\lesssim \sum_{k\neq 0}\frac{1}{k^{N+2}},
\end{aligned}\end{eqnarray*}
which, together with the estimate \eqref{non-poly-1} imply that, in the present case, the solution $u(t, x)$ will exhibit the same asymptotic behavior as the polynomial case. The times at which the solution (approximately) exhibits revivals are densely embedded in the times at which it has a continuous, fractal profile.

For example, the Boussinesq equation
\begin{equation}\label{bs}
u_{tt}+\frac{1}{3}u_{xxxx}-u_{xx}+\frac{3}{2}\alpha(u^2)_{xx}=0,
\end{equation}
has the linear dispersion relation $\omega(k)=\pm k\sqrt{\frac{1}{3}k^2+1}$, and its leading order asymptotics is $\pm \frac{1}{\sqrt{3}}k^2$. The solution of the periodic initial-boundary value problem for the linearization of equation \eqref{bs} subject to the step function initial data \eqref{iv-s} at several representative rational and irrational times are plotted in Figure \ref{lin-bs}. As illustrated in these figures, the solutions  exhibit the (approximately) revival profile at rational times, and the overall jump discontinuities and revival structure is very similar to that of the linear beam equation.

\begin{figure}[H]
    \centering
    \subfigure[$t=\pi/2$]{
    \includegraphics[width=0.3\textwidth]{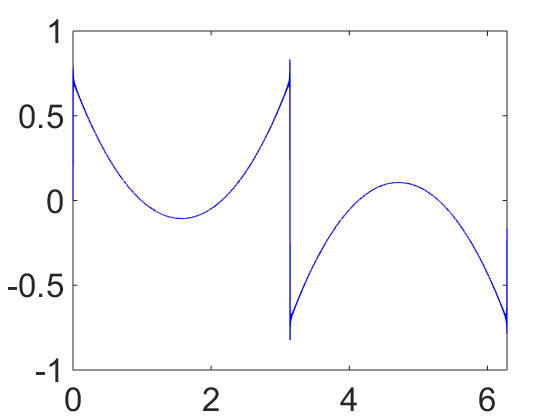}
}
    \subfigure[$t=\pi/3$]{
    \includegraphics[width=0.3\textwidth]{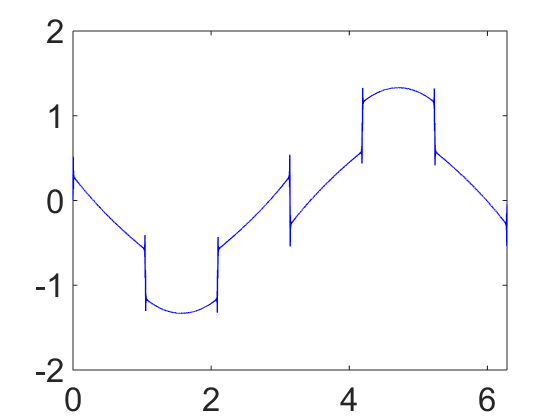}
}
    \subfigure[$t=\pi/5$]{
    \includegraphics[width=0.3\textwidth]{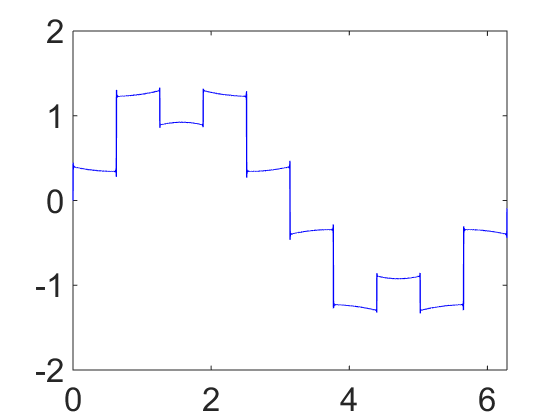}
}\\
 \subfigure[$t=0.1$]{
    \includegraphics[width=0.3\textwidth]{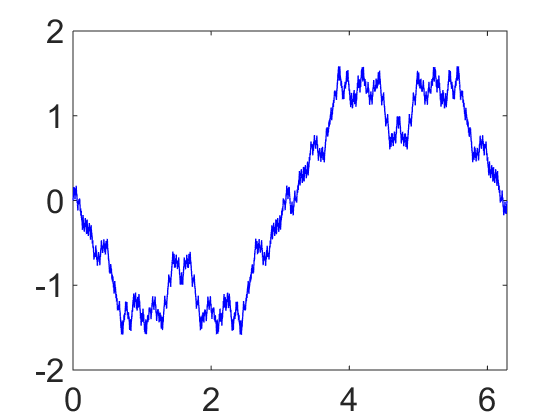}
}
    \subfigure[$t=0.3$]{
    \includegraphics[width=0.3\textwidth]{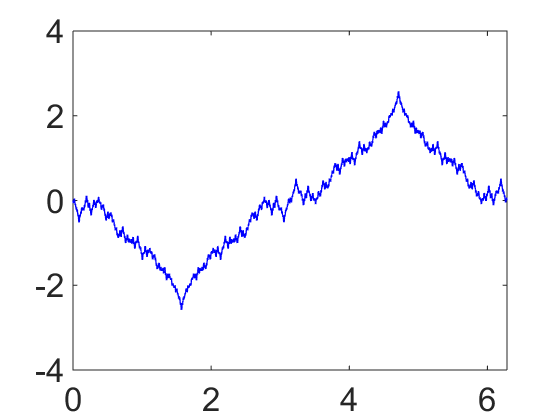}
}
    \subfigure[$t=0.5$]{
    \includegraphics[width=0.3\textwidth]{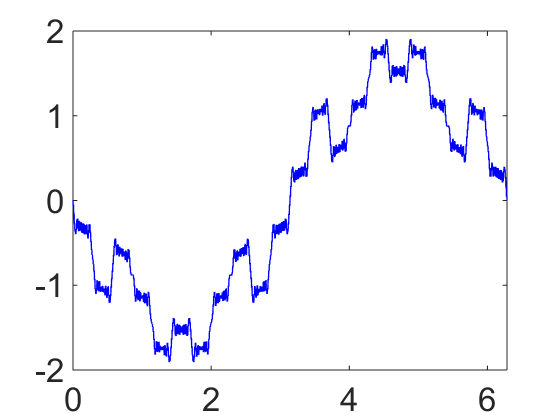}
}

     \caption{{\small The solutions  to the periodic initial-boundary value problem for the linear Boussinesq equation. }}
     \label{lin-bs}
     \end{figure}

On the other hand, if the equation admits a non-polynomial dispersion relation with an non-integral asymptotic exponent, i.e.,
\begin{equation*}
\sqrt{\varphi(k)}\sim \vert k\vert ^\alpha,\quad 2\leq \alpha\notin \mathrm{Z}, \;\mathrm{as}\quad  \vert k\vert\rightarrow \infty,
\end{equation*}
we still estimate the two summations in \eqref{sol-ge-eq} separately.
As far as the first one is concerned,  as studied in \cite{CO12}, its overall qualitative behavior is entirely determined by the asymptotic exponent $\alpha$. In particular, when $2\leq \alpha\notin \mathrm{Z}$ is not an integer, only fractal solution profiles will be observed at every time. While, as for the second term,
observe that in the present situation,
\begin{eqnarray*}\begin{aligned}
&\left|\ \sum_{k\neq 0}\frac{\widehat{g}(k)}{\sqrt{\varphi(k)}}\sin(\sqrt{\varphi(k)} t)e^{\i kx}-\sum_{k\neq 0}\frac{\widehat{g}(k)}{\vert k\vert ^{[\alpha]+1} }\sin(\vert k\vert ^\alpha t)e^{\i kx}\ \right|\\
&\hskip1.5in\leq \sum_{k\neq 0}\frac{\vert k\vert ^{[\alpha]+1}-\vert k\vert ^{\alpha}}{\vert k\vert ^{\alpha}\vert k\vert ^{[\alpha]+1}}\vert \widehat{g}(k)\vert \lesssim \sum_{k\neq 0}\frac{\vert k\vert ^{\alpha'}\ln \vert k\vert}{\vert k\vert ^{\alpha}\vert k\vert ^{[\alpha]+1}}\vert \widehat{g}(k)\vert,
\end{aligned}\end{eqnarray*}
for some $\alpha <\alpha'<[\alpha]+1$. Note that if $g(x)$ is of bounded variation, the estimate can only be obtained by using $\sum_{k\neq 0}{\vert k\vert ^{-[\alpha]-1}}$. In view of this situation, we need to further require that $g(x)$ satisfies
\begin{equation*}
\int\,e^{\i kx}\;\mathrm{d}g\sim O(k^{(\alpha-\alpha')-\delta }) \quad{\rm for\  all} \quad\delta > 0.
\end{equation*}
Under this hypothesis, the above estimate is bounded by $\sum_{k\neq 0}{\vert k\vert ^{-[\alpha]-2}}$, and hence the second term is completely determined by the series
\begin{equation*}
\sum_{k\neq 0}\frac{\widehat{g}(k)}{\vert k\vert ^{[\alpha]+1}}\sin(\vert k\vert ^\alpha t)e^{\i kx},
\end{equation*}
which, compared with the first term,  will admit better regularity.

\begin{figure}[H]
    \centering
    \subfigure[$t=\pi/10$]{
    \includegraphics[width=0.3\textwidth]{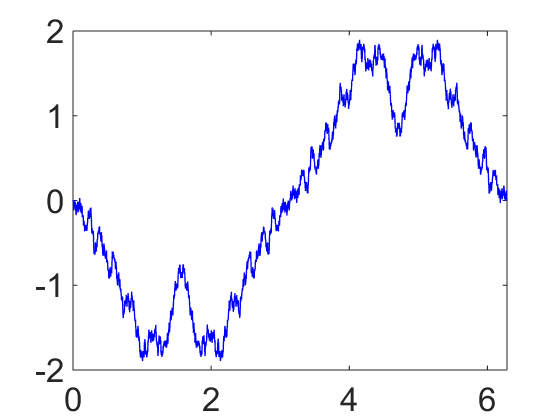}
}
    \subfigure[$t=\pi/5$]{
    \includegraphics[width=0.3\textwidth]{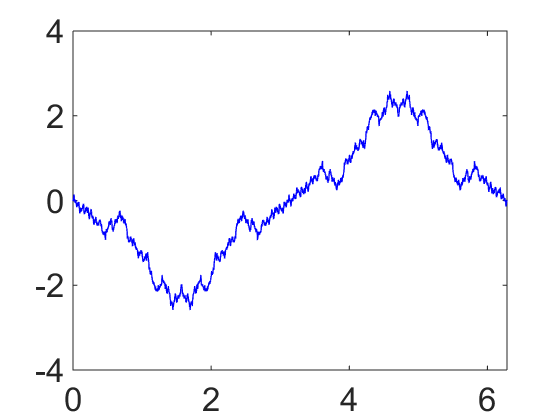}
}
    \subfigure[$t=\pi/3$]{
    \includegraphics[width=0.3\textwidth]{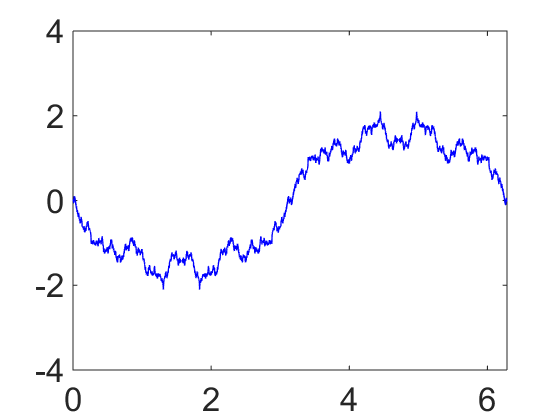}
}\\
 \subfigure[$t=0.1$]{
    \includegraphics[width=0.3\textwidth]{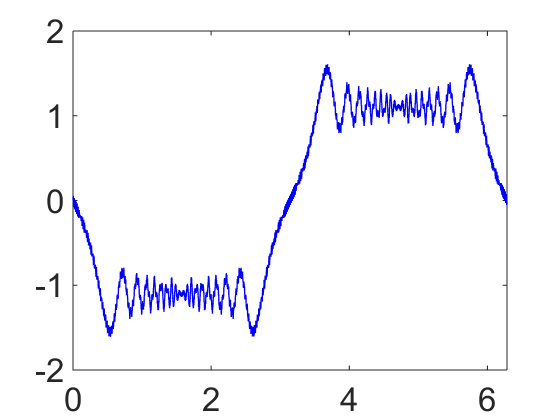}
}
    \subfigure[$t=0.3$]{
    \includegraphics[width=0.3\textwidth]{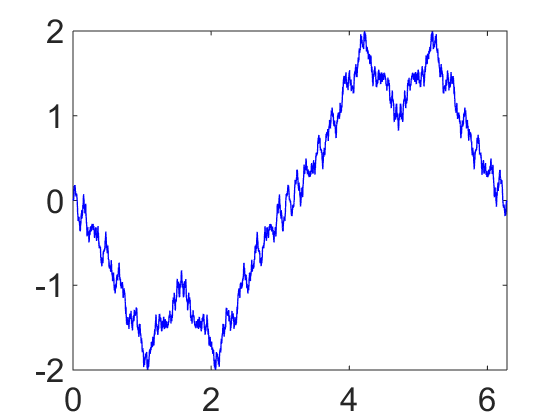}
}
    \subfigure[$t=0.5$]{
    \includegraphics[width=0.3\textwidth]{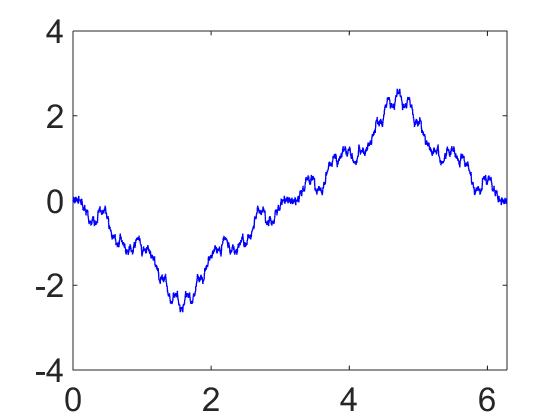}
}

     \caption{{\small The solutions  to the periodic Riemann problem for the linear equation \eqref{th-eq}.}}
     \label{bi-BO}
   \end{figure}

We conclude that, in the present case, the solutions will retain a fractal profile at all times. Results confirming this are displayed in Figure \ref{bi-BO}, which are the plots of the solutions to the periodic Riemann problem for the case of three-halves dispersion relation  $\omega(k)=\pm \vert k\vert^{\frac{3}{2}}$ corresponding to the equation
\begin{equation}\label{th-eq}
u_{tt}=\mathcal{H}[u_{xxx}],
\end{equation}
where $\mathcal{H}$ denotes the periodic Hilbert transform,
\begin{equation*}
\mathcal{H}[f](x)=\frac{1}{\pi}\sum_{-\infty}^{+\infty}\,\int_{-\pi}^{\pi}\hskip-19pt \hbox{---} \hskip10pt \frac{f(y)}{x-y+2\pi k}\,\mathrm{d}y =\frac1{2\pi}\, \int_{-\pi}^{\pi}{\hskip-19pt \hbox{---} \hskip10pt \cot\frac{x-y}2 f(y)}\,\mathrm{d}y .
\end{equation*}

\section{Numerical simulation of dispersive revival for  nonlinear equations}

In this section, we will explore the effect of periodicity on rough initial data for nonlinear equations in the context of the nonlinear defocusing cubic beam equation of the form
\begin{equation}\label{non-beam}
u_{tt}+u_{xxxx}+\mu \,u+ \varepsilon \,\vert u\vert^2u=0,
\end{equation}
which is motivated by the nonlinear Boussinesq equation, see \cite{MMS} for details. We will numerically approximate the solutions to the periodic initial-boundary value problem for the  beam equation \eqref{non-beam}  subject to periodic boundary conditions on  $[-\pi,\,\pi]$, with the same step function \eqref{iv-s} as initial data.

The goal of this section is to investigate to what extent revival and fractalization phenomena persist into the nonlinear regime.
A basic numerical technique, the Fourier spectral method, will be employed to approximate the solution to this initial-boundary value problem. As we will see, our numerical studies strongly indicate that the dispersive revival phenomenon admitted by the associated linearized equation will persist into the nonlinear regime. However, some of the qualitative details --- for instance, the convexity of the curves between the jump discontinuities --- will be affected by the nonlinearity, in contrast to what was observed in the unidirectional case.

\subsection{ The Fourier spectral method}
Let us first summarize the basic ideas behind the Fourier spectral method for approximating the solutions to nonlinear equations. One can refer to \cite{GO, Tre} for details of the method.

Formally, consider the initial value problem for a nonlinear evolution equation
\begin{equation}\label{ns-eq}
u_t=K[u],\qquad  u(0, x)=u_0(x),
\end{equation}
 where $K$ is a differential operator in the spatial variable with no explicit time dependence. Suppose $K$  can be written as $K=L+N$, in which $L$ is a linear operator characterized by its Fourier transform $\widehat{Lu}(k)=\omega(k)\widehat{u}(k)$, while $N$ is a nonlinear operator. We use $\mathcal{F}[\cdot]$ and $\mathcal{F}^{-1}[\cdot]$ to denote the Fourier transform and the inverse Fourier transform of the indicated function, respectively, so that the Fourier transform for equation in \eqref{ns-eq} takes the form
$$\widehat{u}_t=\omega(k)\widehat{u}+\mathcal{F}[\,N(\mathcal{F}^{-1}[\widehat{u}])\,].$$
Firstly, periodicity and discretization of the spatial variable enables us to apply the fast Fourier transform (FFT) based on, for instance, 512 space nodes, and arrive at a system of ordinary differential equations (ODEs), which we solve numerically. For simplicity, we adopt a uniform time step $0<\Delta t\ll1$, and seek to approximate the solution $\hat{u}(t_n)$ at the successive times $t_n=n\Delta t$ for $n=0, 1, \ldots$. The classic fourth-order Runge-Kutta method, which has a local truncation error of $O((\Delta t)^5)$, is adopted, and its iterative scheme is given by
\begin{equation*}
\widehat{u}(t_{n+1})=\widehat{u}(t_n)+\frac{1}{6}(f_{k_1}+2f_{k_2}+2f_{k_3}+f_{k_4}),\quad n=0, 1, \ldots, \quad \widehat{u}(t_0)=\widehat{u}_0(k),
\end{equation*}
where
\begin{eqnarray*}\begin{aligned}
f_{k_1}&=f(t_n, \widehat{u}(t_n)),&
f_{k_2}&= f(t_n+\Delta t/2,\widehat{u}(t_n)+\Delta t f_{k_1}/2),\\
f_{k_3}&= f(t_n+\Delta t/2, \widehat{u}(t_n)+\Delta t f_{k_2}/2),&\qquad
f_{k_4}&= f(t_n+\Delta t, \widehat{u}(t_n)+\Delta t f_{k_3})
\end{aligned}\end{eqnarray*}
with
\begin{eqnarray*}
f(t, \widehat{u})=\omega(k)\widehat{u}+\mathcal{F}[\,N(\mathcal{F}^{-1}[\widehat{u}])\,].
\end{eqnarray*}
Accordingly, the approximate solution $u(t, x)$ can be obtained through the inverse discrete Fourier transform.

Since the Runge-Kutta method is designed for first order systems of ordinary differential equations, we convert our bidirectional second order in time system  \eqref{non-beam} into a first order system by setting
\begin{equation*}
v=u_t,
\end{equation*}
an hence the beam equation \eqref{non-beam} is mapped to the following evolutionary system
 \begin{equation}\label{non-beam-sys}
u_{t}=v,\qquad v_t=-u_{xxxx}-\mu \,u-\varepsilon \,\vert u\vert^2u.
\end{equation}
The Fourier transform for \eqref{non-beam-sys} takes the form
\begin{equation}\label{non-beam-sys-ft}
\widehat{u}_{t}=\widehat{v},\qquad \widehat{v}_t=-( k)^4\widehat{u}-\mu \,\widehat{u}-\varepsilon \,\mathcal{F}[\,\vert \mathcal{F}^{-1}[\widehat{u}] \vert^2\mathcal{F}^{-1}[\widehat{u}]\,].
\end{equation}
Using the classic fourth-order Runge-Kutta method to solve the resulting system \eqref{non-beam-sys-ft}, and then taking the inverse discrete Fourier transform,  one can obtain the numerical solution to the periodic initial-boundary value problem for the nonlinear beam equation \eqref{non-beam}.

\subsection{ Numerical Results} Figure \ref{non-beam-ra} and Figure \ref{non-beam-irra} display some results from our numerical approximations of the solutions to the nonlinear beam equation \eqref{non-beam} with periodic boundary conditions and initial conditions \eqref{iv-s} at some representative rational and irrational times.
Comparing the graphs in these two figures with the graphs corresponding to the same times in Figure 1 and Figure 2, we find that, at each irrational time, all sets of plots are fairly similar to those from the associated linear beam equation,  and the solution still takes a continuous, non-differentiable profile. When it comes to the rational times, the same jump discontinuities consistency for nonlinear and linear equations emerges as well. Meanwhile, closer inspection will reveal some differences. The most noticeable is that, the shape of the curves between jump discontinuities will change with time evolution. More precisely, the graphs corresponding to $t=\pi/5$ show that  the differences of the solution profile between the linear and nonlinear equations is slight, except that,  in the nonlinear case,  the curves between the jumps become closer to constants.
Further, as the power $p$ decreases with increasing time, the variation in the shape of the curves  from linear to nonlinear becomes greater and greater. As illustrated in the graphs corresponding to $t=\pi/3$ and $t=\pi/2$, the convexity of the curves has completely changed. These differences of the qualitative behavior of the solutions exhibit the effect of the nonlinearity.
Furthermore, in order to better understand the effect of the nonlinearity, we perform further numerical experiments for smaller values of coefficients $\varepsilon$ and $\mu$ in equation \eqref{non-beam}. Referring to Figure \ref{non-beam-pi3}, it appears that solution at $t=\pi/3$ tends to the linear profile as $\varepsilon$ tends to zero. Meanwhile, the shape of the curves between jump discontinuities will change as $\varepsilon$ increases, the most noticeable variation being the changes in convexity. More unexpected phenomena appear when $t=\pi/2$.  We find the variation of the profile of the solution will be affected not only by the nonlinear term but also by the linear  term involving $u$. The plots displayed in Figure \ref{non-beam-pi2}, corresponding to some representative coefficients  $\varepsilon$ and $\mu$, suggest that the solution profile, including its convexity and the values of its peak and trough will be affected by the combination of both coefficients $\varepsilon$ and $\mu$.

\begin{figure}[H]
    \centering
    \subfigure[$t=\pi/2$]{
    \includegraphics[width=0.3\textwidth]{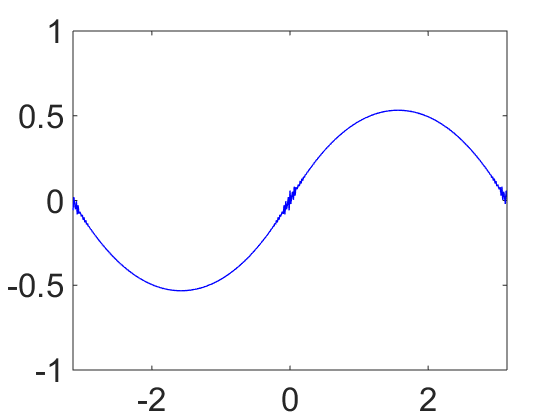}
}
    \subfigure[$t=\pi/3$]{
    \includegraphics[width=0.3\textwidth]{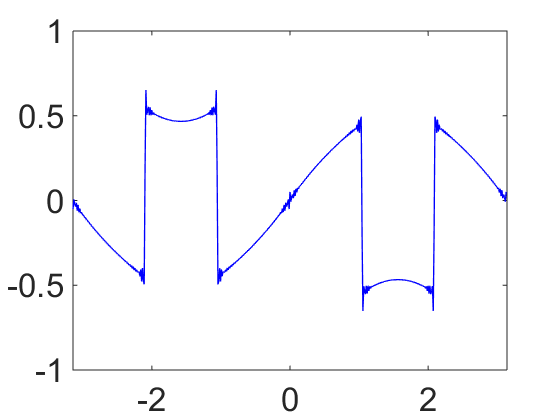}
}
    \subfigure[$t=\pi/5$]{
    \includegraphics[width=0.3\textwidth]{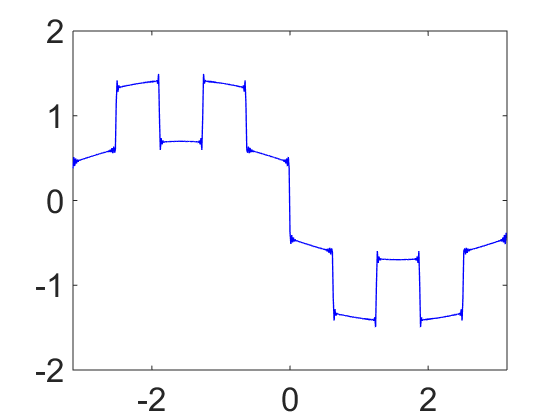}
}
 \caption{{\small The solutions  to the periodic initial-boundary value problem for the beam equation with $\mu=\varepsilon = 1$ at rational times. }}
    \label{non-beam-ra}
     \end{figure}

\vglue-.3in

\begin{figure}[H]
    \centering
 \subfigure[$t=0.1$]{
    \includegraphics[width=0.3\textwidth]{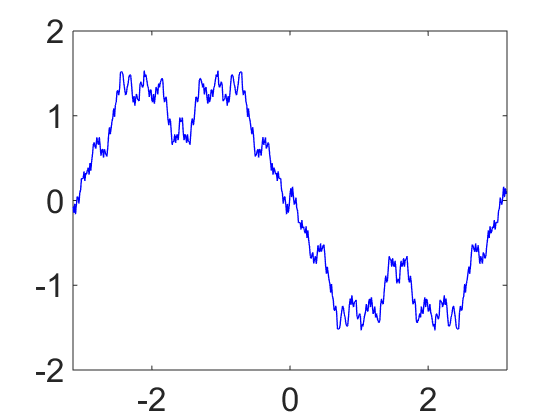}
}
    \subfigure[$t=0.3$]{
    \includegraphics[width=0.3\textwidth]{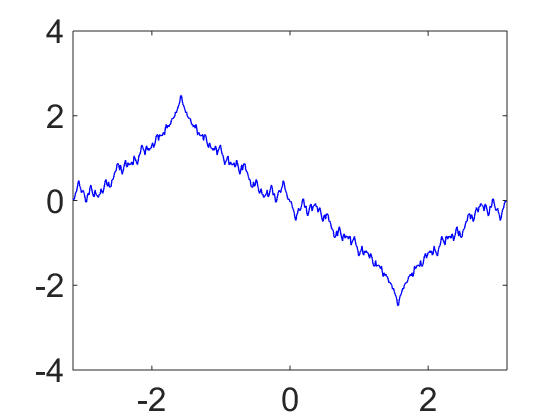}
}
    \subfigure[$t=0.5$]{
    \includegraphics[width=0.3\textwidth]{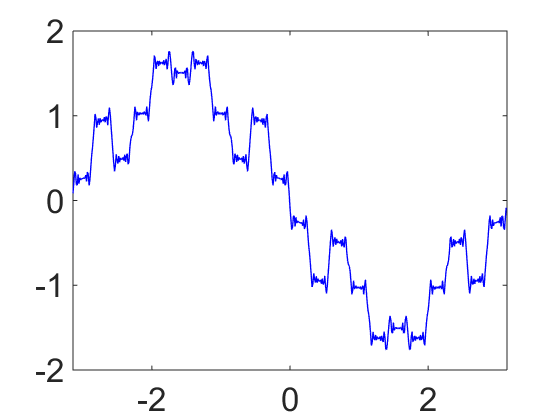}
}

\caption{{\small The solutions  to the periodic initial-boundary value problem for the beam equation with $\mu=\varepsilon = 1$ at irrational times.}}
  \label{non-beam-irra}
        \end{figure}

        \begin{figure}[H]
    \centering
    \subfigure[$\mu=1,\,\varepsilon=0.01$]{
    \includegraphics[width=0.3\textwidth]{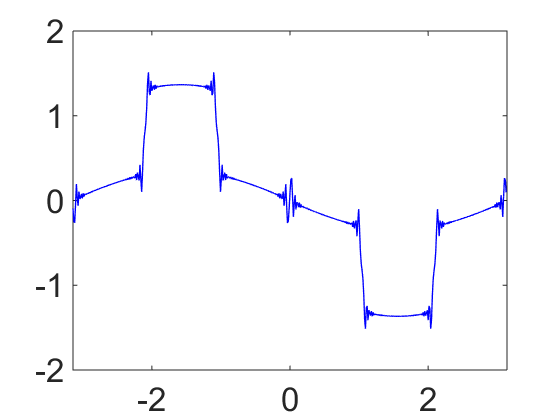}
}
    \subfigure[$\mu=1,\,\varepsilon=0.5$]{
    \includegraphics[width=0.3\textwidth]{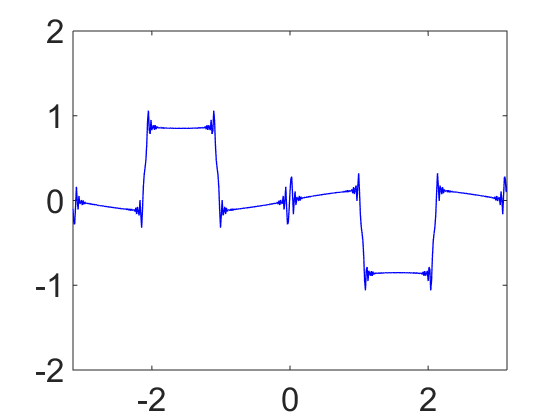}
}
    \subfigure[$\mu=1,\,\varepsilon=0.9$]{
    \includegraphics[width=0.3\textwidth]{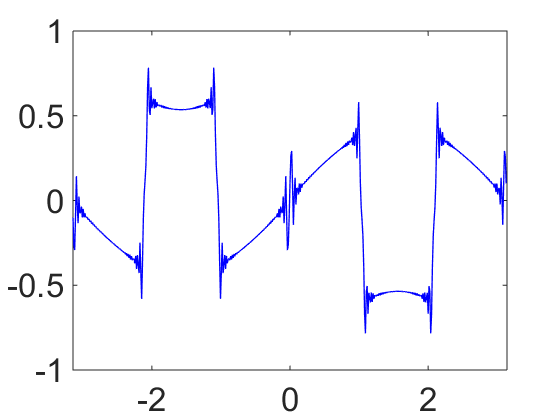}
}
 \caption{{\small The solutions  to the periodic initial-boundary value problem for the beam equation at $t=\pi/3$. }}
    \label{non-beam-pi3}
     \end{figure}

\vglue-.3in

\begin{figure}[H]
    \centering
 \subfigure[$\mu=0.001,\,\varepsilon=0.001$]{
    \includegraphics[width=0.3\textwidth]{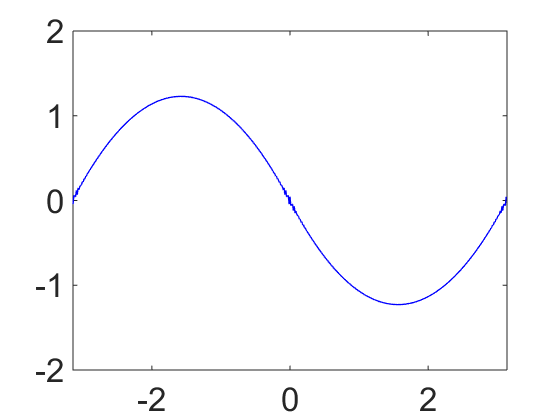}
}
    \subfigure[$\mu=0.001,\,\varepsilon=1$]{
    \includegraphics[width=0.3\textwidth]{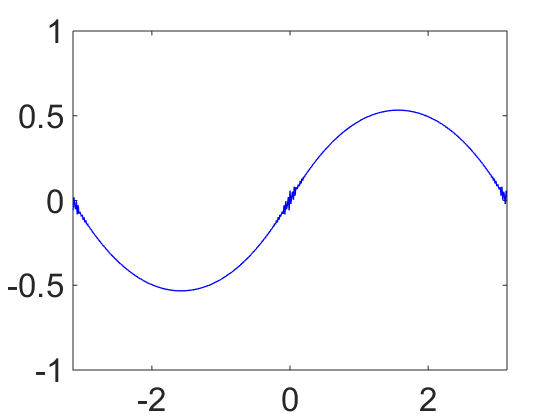}
}
    \subfigure[$\mu=1,\,\varepsilon=0.001$]{
    \includegraphics[width=0.3\textwidth]{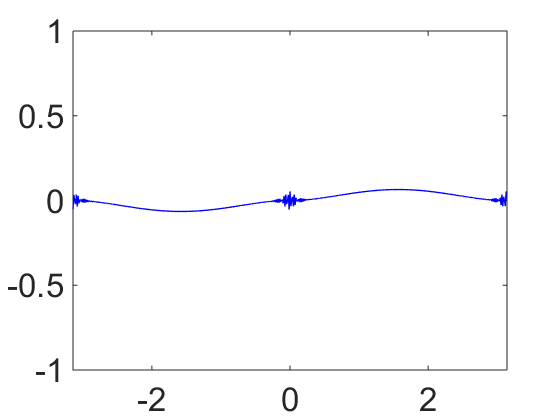}
}

\caption{{\small The solutions  to the periodic initial-boundary value problem for the beam equation at $t=\pi/2$.}}
  \label{non-beam-pi2}
        \end{figure}

Recall that the numerical experiments to the periodic initial-boundary value problem for  the KdV equation, the NLS equation and the multi-component KdV system have been previously analyzed in \cite{CO14, YKQ}, which show that,  in the unidirectional regime, the effect of the nonlinear flow can be regarded as a perturbation of the linearized flow. When it comes to the bidirectional dispersive equations,  our numerical simulation strongly indicates that,  the dichotomy of revival/fractalization at rational/irrational times  in linearization will persist into the nonlinear regime, and the finite ``revival'' nature of the solutions at rational times is not affected by the nonlinearity, however, the influence of the nonlinearity on the qualitative behavior of the solutions is much greater than in the unidirectional setting. Motivated by this observation, formulation of theorems and rigorous proofs concerning this novel revival phenomenon in the nonlinear bidirectional regime, specially for the nonlinear beam and Boussinesq equations,  is eminently worth further study.

\vskip 0.5cm
\noindent {\bf Acknowledgments.} Part of Farmakis' research was conducted during his Ph.D studies which were supported by the Maxwell Institute Graduate School in Analysis and its Applications, a Centre for Doctoral Training funded by EPSRC (grant EP/L016508/01), the Scottish Funding Council, Heriot-Watt University and the University of Edinburgh. Kang's research was supported by the  NSF of China under Grant-12371252 and Basic Science Program of Shaanxi Province under Grant-2019JC-28. Qu's research was supported by the NSF of China under Grant-12431008. Yin's research was supported by Northwest University Youbo Funds 2024006.   

$\,$

\vglue 0.5cm
\newpage


\begin{thebibliography}{50}


\bibitem{Ber} {\small \textsc{M.V.~Berry,}\ Quantum fractals in boxes, {\it J.~Phys.~A} {\bf 29} (1996), 6617-6629.}

\bibitem{BK}{\small \textsc{M.V.~Berry and S.~Klein,}\ Integer, fractional and fractal Talbot effects,
 {\it J.~Mod.~Optics} {\bf 43} (1996), 2139-2164.}

\bibitem{BMS}{\small \textsc{M.V.~Berry, I.~Marzoli, and W.~Schleich,}\ Quantum carpets, carpets of light,
 {\it Physics Wolrd} {\bf 14}(6) (2001), 39-44.}

\bibitem{BFP}{\small\textsc{L.~Boulton, G.~Farmakis and B.~Pelloni,}\ Beyond Periodic Revivals for Linear Dispersive PDEs, {\it Proc. Roy. Soc. A} {\bf  477} (2021),  20210241.}

\bibitem{BFP-Pot}{\small\textsc{L.~Boulton, G.~Farmakis and B.~Pelloni,}\ The Phenomenon of Revivals on Complex Potential Schr\'{o}dinger's equation, {\it arXiv preprint}, arXiv:2308.09961 (2023).}

\bibitem{BOPS}{\small\textsc{L.~Boulton, P.J.~Olver, B.~Pelloni, and D.A.~Smith,}\  New revival phenomena for linear integro-differential equations, {\it Stud.~Appl.~Math.} {\bf 147} (2021), 1209-1239.}

\bibitem{CO12} {\small \textsc{G.~Chen and P.J.~Olver,}\ Dispersion of discontinuous periodic waves,  {\it Proc.~Roy.~Soc. A} {\bf  469} (2012),  20120407.}

\bibitem{CO14} {\small \textsc{G.~Chen and P.J.~Olver,}\ Numerical simulation of nonlinear dispersive quantization,  {\it  Discrete Contin.~Dyn.~Syst.} {\bf  34} (2014), 991-1008.}

\bibitem{CKKKS}{\small\textsc{G.~Cho, J.~Kim, S.~Kim, Y.~Kwon and I.~Seo,}\ Quantum Revivals and Fractality for the Schr\'{o}dinger Equation, {\it arXiv preprint}, arXiv:2105.12510 (2023).}

 \bibitem{CET} {\small \textsc{V.~Chousionis, M.B.~Erdo\u{g}an, and N.~Tzirakis,}\ Fractal solutions of linear and nonlinear dispersive partial differential equations,  {\it Proc.~London Math.~Soc.} {\bf  3} (2015), 543-564.}

\bibitem{Dui} {\small \textsc{J.J.~Duistermaat,}\ Selfsimilarity of "Riemann's Nondifferentalble function", Nieuw Archief voor Wiskunde {\bf 9}(3), (1991), 303-337.}

 \bibitem{ES19} {\small \textsc{M.B.~Erdo\u{g}an and G.~Shakan,}\ Fractal solutions of dispersive partial differential equations on the torus,  {\it Selecta Math.} {\bf 25} (2019), 11.}

 \bibitem{ET13-nls} {\small \textsc{M.B.~Erdo\u{g}an and N.~Tzirakis,}\ Talbot effect for the cubic nonlinear Schr$\ddot{o}$dinger equation on the torus,  {\it  Math.~Res.~Lett.} {\bf  20} (2013), 1081-1090.}

\bibitem{ET13-kdv} {\small \textsc{M.B.~Erdo\u{g}an and N.~Tzirakis, }\ Global smoothing for the periodic KdV evolution, {\it Int.~Math.~Res.~Not.} {\bf 20} (2013), 4589-4614.}

 \bibitem{ET16} {\small \textsc{M.B.~Erdo\u{g}an and N.~Tzirakis,}\  {\it Dispersive Partial Differential Equations: Wellposedness and Applications}, London Math.~Soc.~Student Texts, vol.~86.~Cambridge University Press, Cambridge, 2016.}

 \bibitem{Fa}{\small\textsc{G.~Farmakis,}\ Revivals in Time Evolution Problems, Ph.D.~thesis, Heriot-Watt University, UK, 2022.}

\bibitem{Gea}{\small\textsc{J.A.~Gear,}\ Strong interactions between solitary waves belonging to different wave modes, {\it Stud.~Appl.~Math.} {\bf 72} (1985), 95-124.}

\bibitem{GG}{\small\textsc{J.A.~Gear and R.~Grimshaw,}\ Weak and strong interactions between internal solitary waves,
{\it  Stud.~Appl.~Math.} {\bf 70} (1984), 235-258.}

 \bibitem{GO} {\small \textsc{D.~Gottlieb and S.A.~Orszag,}\ Numerical Analysis of Spectral Methods:
Theory and Applications, SIAM, 1977.}

 \bibitem{HKLR}{\small \textsc{H.~Holden, K.H.~Karlsen, K.A.~Lie, and N.H.~Risebro,}\ Splitting Methods for Partial
Differential Equations with Rough Solutions: Analysis and Matlab Programs, European
Mathematical Society Publ., Z\"{u}rich, 2010.}

\bibitem{HKRT} {\small \textsc{H.~Holden, K.H.~Karlsen, N.H.~Risebro, and T.~Tao,}\ Operator splitting for the KdV
equation, {\it Math.~Comp.} {\bf 80} (2011),  821-846.}


\bibitem{HKV1} {\small \textsc{F.~de la Hoz, S.~Kumar and L.~Vega,}\ Vortex filament equation for a regular polygon, Nonlinearity {\bf 27}(12), (2014), 3031-3057.}

\bibitem{HKV2} {\small \textsc{F.~de la Hoz, S.~Kumar and L.~Vega,}\ Vortex filament equation for a regular polygon in the hyperbolic plane, J.~Nonlinear Sci.~{\bf 28}(6), (2022), 1-34.}


\bibitem{Iorio}{\small \textsc{R. Iorio and V. d. M Iorio,}\ Fourier analysis and partial differential equations, Cambridge Studies in Advanced Mathematics, vol. 70. Cambridge University Press, Cambridge, 2001.}

 \bibitem{Ito}{\small\textsc{M.~Ito,}\ Symmetries and conservation laws of a coupled nonlinear wave equation, {\it Phys.~Lett.~A} {\bf 91} (1982), 335-338.}

 \bibitem{KR}{\small \textsc{ L.~Kapitanski and I.~Rodnianski,}\  Does a quantum particle know the time?, {\it in: Emerging Applications of  Number Theory}, D.~Hejhal, J.~Friedman, M.C.~Gutzwiller and A.M.~Odlyzko, eds., IMA Volumes
in Mathematics and its Applications, vol.~109, Springer Verlag, New York, 1999, pp.~355-371.}


\bibitem{Olv10} {\small \textsc{P.J.~Olver,}\  Dispersive quantization,  {\it Amer.~Math.~Monthly} {\bf 117} (2010), 599-610.}

\bibitem{OSS}{\small\textsc{P.J.~Olver, N.E.~Sheils, and D.A.~Smith,}\ Revivals and fractalization in the linear free space Schr\"{o}dinger equation, {\it Quart.~Appl.~Math.}  {\bf 78} (2020), 161-192.}

\bibitem{OT18}{\small\textsc{P.J.~Olver and E.~Tsatis,}\ Points of constancy of the periodic linearized Korteweg-de Vries equation, {\it Proc.~Roy.~Soc.~London A}  {\bf 474} (2018), 20180160.}

\bibitem{Osk98}{\small \textsc{K.I.~Oskolkov,}\ Schr\"{o}dinger equation and oscillatory Hilbert transforms of second degree, {\it J.~Fourier Anal.~Appl.} {\bf 4} (1998), 341-356.}

\bibitem{Osk92}{\small \textsc{K.I.~Oskolkov,}\ A class of I.M.~Vinogradov's series and its applications in harmonic analysis, {\it in: Progress in Approximation Theory}, Springer Ser.~Comput.~Math., 19, Springer, New York, 1992, pp.~353-402.}

\bibitem{Rod} {\small \textsc{I.~Rodnianski,}\ Fractal solutions of the Schr\"{o}dinger equation, {\it Contemp.~Math.} {\bf 255} (2000),  181-187.}

\bibitem{Rod-potential} {\small \textsc{I.~Rodnianski,}\ Continued Fractions and Schr\"{o}dinger Evolution, {\it Contemp.~Math.} {\bf 236} (1999), 311-321.}

\bibitem{MMS} {\small \textsc{M.M.~Shakiryanov,}\ On the asymptotic reduction to the multidimensional nonlinear Schr\"{o}dinger equation, arXiv:math-ph/9907003.}

\bibitem{Smi} {\small \textsc{D.A.~Smith,}\ Revivals and Fractalization,  {\it  Dyn.~Sys.~Web} {\bf  2020} (2020), 1-8.}

\bibitem{Stein2011} {\small \textsc{E.M. Stein, R.Shakarchi,}\ Princeton Lectures in Analysis I. Fourier Analysis: An Introduction, Princeton University Press, 2011.}

\bibitem{Tal} {\small \textsc{H.F.~Talbot,}\  Facts related to optical science,  No.~IV, {\it Philos.~Mag.} {\bf 9} (1836),  401-407.}

\bibitem{Tay} {\small \textsc{M.Taylor, }\ The Schr\"{o}dinger equation on spheres, {\it Pacific J.~Math.} {\bf 209} (2003), 145-155.}

\bibitem{Tre} {\small \textsc{L.N.~Trefethen,}\ Spectral Methods in Matlab, Society for Industrial and Applied Mathematics, SIAM, 2001.}

\bibitem{VVS} {\small \textsc{M.J.J.~Vrakking, D.M.~Villeneuve and A.~Stolow, }\ Observation of fractional revivals of a molecular wavepacket, {\it Phys.~Rev.~A} {\bf 54} (1996), R37-R40.}

\bibitem{Whi} {\small \textsc{G.B.~Whitham, }\ Linear and Nonlinear Waves, John Wiley $\&$ Sons, New York, 1974.}


\bibitem{YKQ} {\small \textsc{Z.H.~Yin, J.~Kang and C.Z.~Qu, }\ Dispersive quantization and fractalization for multi-component dispersive equation, {\it Physica D} {\bf 444} (2023), 133598.}




\end{thebibliography}
\end{document}